\documentclass[12pt,leqno]{amsart}
\usepackage{amssymb}
\usepackage{amsthm}
\usepackage{tikz}

\newtheorem{theorem}{Theorem}
\newtheorem{remark}{Remark}
\newtheorem{question}{Question}
\newtheorem{lemma}{Lemma}
\newtheorem{proposition}{Proposition}
\newtheorem{corollary}{Corollary}

\def\bF{\mathbb F}

\def\bC{\mathbb C}

\DeclareMathOperator{\Pic}{Pic}

\begin{document}
\title{ Arithmetic of  "units" in $\mathbb F_q[T]$ }

\author{Bruno Angl\`es \and Mohamed Ould Douh}

\address{
Universit\'e de Caen, 
CNRS UMR 6139, 
Campus II, Boulevard Mar\'echal Juin, 
B.P. 5186, 
14032 Caen Cedex, France.
}
\email{bruno.angles@unicaen.fr, mohamed.douh@unicaen.fr}

\maketitle

\noindent The aim of this note is to study the arithmetic of  Taelman's unit module for  $A:= \mathbb F_q[T].$  This module is the $A$-module (via the Carlitz module) generated by $1.$ Let $P$ be a monic irreducible polynomial in $A,$ we show that the "$P$-adic behaviour" of $1$ is connected to some isotypic component of the ideal class group of the integral closure of $A$ in the $P$th cyclotomic function field. The results contained in this note are applications of the deep results obtained by L. Taelman in \cite{TAE3}.\par
\noindent The authors thank L. Taelman and D. Thakur for fruitfull discussions.\par
\section{Background on the Carlitz module}

\noindent  Let $\mathbb F_q$ be a finite field having $q$ elements, $q\geq 3,$  and let $p$ be the characteristic of $\mathbb F_q.$ Let $T$ be an indeterminate over $\mathbb F_q,$ and set : $ k:= \mathbb F_q(T),$ $A:= \mathbb F_q[T],$ $A_{+}:=\{ a\in A, \, a \, {\rm monic}\, \}.$ A prime in $A$ will be a monic irreducible polynomial in $A.$ Let $\infty$ be the unique place of $k$ which is a pole of $T,$ and set : $k_{\infty }:= \mathbb F_q((\frac{1}{T})).$ Let $\bC_{\infty}$ be a completion of an algebraic closure of $k_{\infty},$ then $\bC_{\infty}$ is algebraically closed and complete and we denote by $v_{\infty}$ the valuation on $\bC_{\infty}$ normalized such that $v_{\infty} (T)=-1.$ We fix an embedding of an algebraic closure of $k$ in $\bC_{\infty},$ and thus all the finite extensions of $k$  considered in this note will be contained in $\bC_{\infty}.$ \par
\noindent Let $L/k$ be a finite extension, we denote by :\par
\noindent - $S_{\infty}(L):$ the set of places of $L$ above $\infty ,$ if $w\in S_{\infty}(L)$ we denote the completion of $L$ at $w$ by $L_w$ and we view $L_w$ as a subfield of $\bC_{\infty},$\par
\noindent - $O_L:$ the integral closure of $A$ in $L,$\par
\noindent - $\Pic (O_L):$ the ideal class group of $L,$\par
\noindent - $L_{\infty}:$ the $k_{\infty}$-algebra $L\otimes_kk_{\infty},$ recall that we have a natural isomorphism of $k_{\infty}$-algebras : $L_{\infty}\simeq \prod_{w\in S_{\infty}(L)} L_w.$\par

\subsection{The Carlitz exponential} ${}$\par
\noindent Set $D_0=1$ and for $i\geq 1, \, D_i=(T^{q^i}-T)D_{i-1}^q.$ The Carlitz exponential is defined by :
$$e_C(X)=\sum_{i\geq 0}\frac{X^{q^i}}{D_i}\, \in k[[X]].$$
Since $\forall i\geq 0,$ $v_{\infty}(D_i) =-iq^i,$ we deduce that $e_C$ defines an entire function on $\bC_{\infty}$ and that $e_C(\bC_{\infty})=\bC_{\infty}.$ Observe that :
$$e_C(TX) = Te_C(X)+ e_C(X)^q.$$
Thus, $\forall a\in A,$ there exists a  $\bF_q$-linear polynomial $\phi_a(X)\in A[X]$ such that $e_C(aX)=\phi_a(e_C(X)).$ The map  $\phi : A\rightarrow End_{\bF_q}(A),$ $a\mapsto \phi_a,$ is an injective morphism of $\bF_q$-algebras called the Carlitz module.\par
\noindent Let $\varepsilon _C ={}^{q-1}\sqrt{T-T^q}\prod_{j\geq 1}(1-\frac{T^{q^j }-T}{T^{q^{j+1}}-T})\, \in \mathbb C_{\infty}.$ Then by \cite{GOS} Theorem 3.2.8,  we have the following equality in $\bC_{\infty}[[X]]:$
$$e_C(X)=X\prod_{\alpha \in \varepsilon_CA\setminus \{0\}}(1-\frac{X}{\alpha}).$$
Note that $v_{\infty}(\varepsilon_C)=-\frac{q}{q-1}.$ 
Let $log_C(X)\in k[[X]]$ be the formal inverse of $e_C(X),$ i.e. $e_C(log_C(X))=log_C(e_C(X))=X.$ Then by \cite{GOS} page 57,  we have :
$$log_C(X)=\sum_{i\geq 0}\frac{X^{q^i}}{L_i},$$
where $L_0=1,$ and for $i\geq 1, $ $L_i =(T-T^{q^i}) L_{i-1}.$ Observe that $\forall i\geq 0, \, v_{\infty}(L_i)= -\frac{q^{i+1}-q}{q-1}.$ Therefore $log_C$ converges on $\{ \alpha \in \bC_{\infty}, \, v_{\infty}(\alpha )> -\frac{q}{q-1}\}.$ Furthermore, for $\alpha$ in $\bC_{\infty }$ such that $v_{\infty }(\alpha)> -\frac{q}{q-1},$ we have :\par
\noindent - $v_{\infty} (e_C(\alpha))=v_{\infty}(log_C(\alpha))=v_{\infty}(\alpha ),$\par
\noindent - $e_C(log_C(\alpha))=log_C(e_C(\alpha))=\alpha.$\par

\subsection{Torsion points}${}$\par
\noindent We recall some basic properties of cyclotomic function fields. For a nice introduction to the arithmetic properties of such fields, we refer the reader to \cite{ROS} chapter 12. Let $P$ be a prime of $A$ of degree $d.$ Set $\Lambda_P:=\{ \alpha\in \bC_{\infty}, \, \phi_P(\alpha)=0\}.$ Note that the elements of $\Lambda_P$ are integral over $A,$ and that $\Lambda_P$ is a $A$-module via $\phi$ which is isomorphic to $\frac{A}{PA}.$ Set $\lambda_P=e_C(\frac{\varepsilon_C}{P}),$ then $\lambda_P$ is a generator of the $A$-module $\Lambda_P.$ Let $K_P=k(\Lambda_P)=k(\lambda_P).$ We have the following properties  :\par
\noindent - $K_P/k$ is an abelian extension of degree $q^d-1,$\par
\noindent - $K_P/k$ is unramified outside $P, \infty,$\par
\noindent - let $R_P=O_{K_P},$ then $R_P=A[\lambda_P],$\par
\noindent - if $w\in S_{\infty}(K_P),$ the completion of $K_P$ at $w$ is equal to $k_{\infty}(\varepsilon _C),$  in particular the decomposition group at $w$ is equal to the inertia group at $w$ and is isomorphic to $\bF_q^*,$ furthermore  $\mid S_{\infty}(K_P)\mid =\frac{q^d-1}{q-1},$\par
\noindent - $K_P/k$ is totally ramified at $P$ and the unique prime ideal of $R_P$ above $P$ is equal to $\lambda_PR_P,$\par
\noindent Let $\Delta={\rm Gal}(K_P/k).$ For $a\in A\setminus PA,$ we denote by $\sigma _a$ the element in $\Delta$ such that $\sigma _a(\lambda_P)=\phi_a(\lambda_P).$ The map : $A\setminus PA \rightarrow \Delta, $ $a\mapsto \sigma_a$ induces an isomorphism of groups :
$$(\frac{A}{PA})^*\simeq \Delta.$$

\subsection{The unit module and the class module}${}$\par
\noindent Let $R$ be an $A$-algebra, we denote by $C(R)$ the $\bF_q$-algebra $R$ equipped with the $A$-module structure induced by $\phi,$ i.e. : $\forall r\in C(R),$ $T.r= \phi_T(r)= Tr+r^q.$ For example, the Carlitz exponential induces the following exact sequence of $A$-modules :
$$0\rightarrow \varepsilon_CA\rightarrow \bC_{\infty}\rightarrow C(\bC_{\infty})\rightarrow 0.$$
Let $L/K$ be a finite extension, then B. Poonen has proved in \cite{POO} that $C(O_L)$ is not a finitely generated $A$-module. Recently, L. Taelman has introduced  in \cite{TAE1} a natural sub-$A$-module of $C(O_L)$ which is finitely generated and called the unit module associated to $L$ and $\phi.$ Fisrt note that the Carlitz exponential induces a morphism of $A$-modules : $L_{\infty} \rightarrow C(L_{\infty}),$ and the kernel of this map is a free $A$-module of rank $\mid\{ w\in S_{\infty}(L),\, \varepsilon_C \in L_w\} \mid.$ Now, let's consider the  natural map of $A$-modules  induced by the inclusion $C(O_L)\subset C(L_{\infty}):$
$$\alpha_L :\, C(O_L) \rightarrow \frac{C(L_{\infty})}{e_C(L_{\infty})}.$$
L. Taelman has proved the following remarkable results (\cite{TAE1}, Theorem 1, Corollary 1):\par
\noindent - $U(O_L):= {\rm Ker}\alpha_L$ is a finitely generated $A$-module of rank $[L:k]-\mid\{ w\in S_{\infty}(L),\, \varepsilon_C \in L_w\} \mid,$ the $A$-module (via $\phi$) $U(O_L)$ is called the unit module attached to $L$ and $\phi,$\par
\noindent - $H(O_L):= {\rm Coker} \alpha_L$ is a finite $A$-module called the class module associated to $L$ and $\phi .$\par
\noindent Set:
$$\zeta_{O_L}(1):= \sum_{I\not = (0)}\frac{1}{[\frac{O_L}{I}]_A}\, \in k_{\infty},$$
where the sum is taken over the non-zero ideals of $O_L,$ and where for any finite $A$-module $M,$ $[M]_A$ denotes the monic generator of the Fitting ideal of the finite $A$-module $M.$ Then, we have the following class number formula (\cite{TAE2}, Theorem 1) :
$$\zeta_{O_L}(1) =[H(O_L)]_A\, [O_L:e_C^{-1}(O_L)]\,,$$ 
where $[O_L:e_C^{-1}(O_L)]\in k_{\infty}^*$ is a kind of regulator (see \cite{TAE2} for more details).\par

\section {The unit module for $\mathbb  F_q[T]$}

\subsection{Sums of polynomials}${}$\par
\noindent In this paragraph, we recall some computations made by G. Anderson and D. Thakur (\cite{AND&THA} pages 183,184).\par
\noindent Let $X,Y$ be two indeterminates over $k.$ We define the polynomial $\Psi_k(X)\in A[X]$ by the following identity :
$$e_C(Xlog_C(Y))=\sum_{k\geq 0}\Psi_k(X)Y^{q^k}.$$
We have that $\Psi_0(X)=X$ and for $k\geq 1:$
$$\Psi_k(X)=\sum_{i=0}^{k}\frac{1}{D_i(L_{k-i})^{q^i}}X^{q^i}.$$
For $a=a_0+a_1 T+\cdots +a_nT^n , $ $a_0,\cdots a_n \in \mathbb F_q,$ we have :
$$\phi_a(X)=\sum_{i=0}^n [^{a}_i] X^{q^i},$$
where $[^{a}_i] \in A$ for $i=0,\cdots ,n,$ $[^{a}_0]=a$ and $[^{a}_n]=a_n.$ But since $e_C(aX)=\phi_a(e_C(X)),$ we deduce that for $k\geq 1$ :
$$\Psi_k (X) =\frac{1}{D_k} \prod_{a\in A(d)}(X-a),$$
where $A(d)$ is the set of elements in $A$ of degree strictly less than $k.$ In particular :
$$\Psi_k(X+T^k) =\Psi_k(X)+1 =\frac{1}{D_k}\prod_{a\in A_{+,k}}(X+a),$$
where $A_{+,k}$ is the set of monic elements in $A$ of degree $k.$ Now for $j\in \mathbb N$ and for $i \in \mathbb Z,$ set :
$$S_j(i) =\sum_{a\in A_{+,j}}a^i \, \in k.$$
Note that the derivative of $\Psi_k(X)$ is equal to $\frac{1}{L_k}.$ Therefore we get :
$$\frac{1}{L_k} \frac{1}{\Psi_k(X)+1}=\sum_{a\in A_{+,k}}\frac{1}{X+a}.$$
Thus:
$$\frac{1}{L_k} \frac{1}{\Psi_k(X)+1}=\sum_{n\geq 0} (-1)^n S_k(-n-1) X^{n}.$$
But :
$$\Psi_k(X) \equiv \frac{1}{L_k} X \mod{X^q}.$$
Therefore :
$$\forall k\geq 0, \, {\rm for}\, c\in \{ 1, \cdots , q-1\},\, S_k(-c) =\frac{1}{L_k^c} .$$
But observe that we also have :
$$\frac{1}{L_k} \frac{1}{\Psi_k(X)+1}=\sum_{n\geq 0} (-1)^n S_k(n) X^{-n-1}.$$
But :
$$\frac{1}{\Psi_k(X)+1}\equiv 0\pmod{X^{-q^k}}.$$
Therefore :
$$\forall k\geq 0, \, {\rm for}\, i\in \{0,\cdots , q^k -2\}, \, S_k(i)=0.$$
The Bernoulli-Goss numbers, $B(i)$ for $i\in \mathbb N,$  are elements of $A$ defined as follows :\par
\noindent - $B(0)=1,$\par
\noindent - if $i\geq 1$ and $i\not \equiv 0\pmod{q-1},$ $B(i) =\sum_{j\geq 0}S_j(i)$ which is a finite sum by our previous discussion,\par
\noindent - if $i\geq 1,$ $i\equiv 0\pmod{q-1},$ $B(i) =\sum_{j\geq 0} jS_j(i) \in A.$\par
\noindent We have :
\begin{lemma}
\label{lemma1}
Let $P$ be a prime of $A$ of degree $d$ and let $c\in \{2, \cdots , q-1\}.$ Then :
$$B(q^d-c)\equiv \sum_{k=0}^{d-1} \frac{1}{L_k^{c-1}}\pmod{P}.$$
\end{lemma}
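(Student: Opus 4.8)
The plan is to start directly from the definition of the Bernoulli--Goss number and reduce everything modulo $P$ term by term. Since $q\equiv 1\pmod{q-1}$ we have $q^d-c\equiv 1-c\pmod{q-1}$, and as $c\in\{2,\dots,q-1\}$ this is never $\equiv 0\pmod{q-1}$; hence the first case of the definition applies and gives the finite sum $B(q^d-c)=\sum_{j\geq 0}S_j(q^d-c)$. The whole argument then amounts to evaluating each $S_j(q^d-c)$ modulo $P$ and showing that only the indices $j=0,\dots,d-1$ contribute.

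First I would dispose of the tail $j\geq d$ using the vanishing statement $S_k(i)=0$ for $i\in\{0,\dots,q^k-2\}$ established above. For $j=d$ the hypothesis $c\geq 2$ yields $0\leq q^d-c\leq q^d-2$, so $S_d(q^d-c)=0$; for $j>d$ one has trivially $q^d-c\leq q^d-2\leq q^j-2$, so $S_j(q^d-c)=0$ as well. Thus $B(q^d-c)=\sum_{j=0}^{d-1}S_j(q^d-c)$, and this is precisely the point where the restriction $c\geq 2$ (as opposed to $c=1$) is indispensable.

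For the surviving range $0\leq j<d$ I would pass to the residue field. Every $a\in A_{+,k}$ with $k=j<d=\deg P$ satisfies $\deg a<\deg P$, hence is prime to $P$ and defines a nonzero class in the field $A/PA\simeq\mathbb{F}_{q^d}$. By Fermat's little theorem in $\mathbb{F}_{q^d}$ one gets $a^{q^d-1}\equiv 1$, so $a^{q^d-c}\equiv a^{-(c-1)}\pmod P$, whence
$$S_j(q^d-c)=\sum_{a\in A_{+,j}}a^{q^d-c}\equiv\sum_{a\in A_{+,j}}a^{-(c-1)}=S_j(-(c-1))\pmod P,$$
the middle step being the reduction of a sum of elements whose denominators are prime to $P$. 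Now $c-1\in\{1,\dots,q-2\}\subseteq\{1,\dots,q-1\}$, so the closed form $S_j(-(c-1))=1/L_j^{\,c-1}$ proved above is available; and since $T^{q^i}-T=\prod_{\deg Q\mid i}Q$, for $1\leq i\leq j<d$ each factor $T^{q^i}-T$ is prime to $P$, so $L_j=\prod_{i=1}^{j}(T-T^{q^i})$ is a unit modulo $P$ and $1/L_j^{\,c-1}$ makes sense modulo $P$. Reducing the rational identity modulo $P$ then gives $S_j(q^d-c)\equiv 1/L_j^{\,c-1}\pmod P$, and summing over $j=0,\dots,d-1$ yields the claim.

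I do not expect a serious obstacle beyond keeping the various conditions consistent: one must check \emph{simultaneously} that $c\geq 2$ forces the tail to vanish, that $j<d$ makes both each $a$ and each $L_j$ invertible modulo $P$ so that the negative exponent $-(c-1)$ and the reduction of the rational identity are legitimate, and that the exponent $q^d-c$ collapses to $-(c-1)$ exactly because $A/PA$ has $q^d$ elements. The genuinely hard computations (the evaluation of $S_k(-c)$ and the vanishing of $S_k(i)$) have already been carried out, so the remaining work is essentially this coordination.
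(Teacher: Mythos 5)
Your proof is correct and follows essentially the same route as the paper's: identify $B(q^d-c)$ with the finite sum $\sum_{k=0}^{d-1}S_k(q^d-c)$, reduce each term via $a^{q^d-1}\equiv 1\pmod P$ to $S_k(1-c)$, and invoke the closed form $S_k(-(c-1))=1/L_k^{c-1}$. You merely make explicit two points the paper leaves implicit, namely the vanishing of the terms with $j\geq d$ and the $P$-integrality of the negative-exponent sums; both checks are correct.
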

\begin{proof} Note that $q^d-c$ is not divisible by $q-1$ and that $1\leq q^d-c< q^d-1.$ Thus :
$$B(q^d-c)=\sum_{k=0}^{d-1} S_k(q^d-c).$$
Now, for $k\in \{0,\cdots , d-1\},$ we have :
$$S_k(q^d-c)\equiv S_k(1-c)\pmod {P}.$$
The lemma follows by our previous computations.
\end{proof}
\noindent  We will also nee some properties of the polynomial $\Psi_k :$
\begin{lemma}
\label{lemma2}
${}$\par
\noindent 1) Let $X,Y$ be two indeterminates over $k.$ We have :
$$\forall k\geq 0,\, \Psi_k(XY)=\sum_{i=0}^{k}\Psi_i(X)\Psi_{k-i}(Y)^{q^i}.$$
2) For $k \geq 0,$ we have :
$$\psi_{k+1}(X)=\frac{\Psi_k(X)^q-\Psi_k(X)}{T^{q^{k+1}}-T}.$$
\end{lemma}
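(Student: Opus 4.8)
The plan is to derive both identities from the single generating function
\[
F(X,Y) := e_C(X\log_C(Y)) = \sum_{k\geq 0}\Psi_k(X)\,Y^{q^k},
\]
viewed as a formal power series, exploiting the facts that $e_C$ and $\log_C$ are mutually inverse power series and that the Carlitz action is compatible with them.

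For part 1), the first step is to record the composition identity obtained by substituting $Y=e_C(W)$ for a fresh indeterminate $W$. Since $\log_C(e_C(W))=W$ formally, the defining identity becomes
\[
e_C(XW)=\sum_{i\geq 0}\Psi_i(X)\,e_C(W)^{q^i},
\]
the substitution being legitimate because $e_C(W)=W+O(W^q)$, so each $e_C(W)^{q^i}$ has $W$-order at least $q^i$. I would then substitute $W=Y\log_C(Z)$, so that $e_C(XW)=e_C\big((XY)\log_C(Z)\big)=\sum_k\Psi_k(XY)Z^{q^k}$ while $e_C(W)=e_C(Y\log_C(Z))=\sum_j\Psi_j(Y)Z^{q^j}$. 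Using Frobenius additivity in characteristic $p$ one has $e_C(W)^{q^i}=\sum_j\Psi_j(Y)^{q^i}Z^{q^{i+j}}$, and comparing the coefficients of $Z^{q^k}$ on the two sides yields exactly $\Psi_k(XY)=\sum_{i=0}^k\Psi_i(X)\Psi_{k-i}(Y)^{q^i}$.

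For part 2), the plan is to evaluate $F(X,\phi_T(Y))$ in two different ways. On one hand, expanding directly and using $(TY+Y^q)^{q^k}=T^{q^k}Y^{q^k}+Y^{q^{k+1}}$ gives
\[
F(X,\phi_T(Y))=\sum_{k\geq 0}T^{q^k}\Psi_k(X)Y^{q^k}+\sum_{k\geq 0}\Psi_k(X)Y^{q^{k+1}}.
\]
On the other hand, since $e_C(T\log_C(Y))=\phi_T(e_C(\log_C(Y)))=\phi_T(Y)$ forces $\log_C(\phi_T(Y))=T\log_C(Y)$, we get $F(X,\phi_T(Y))=e_C\big(T\cdot X\log_C(Y)\big)=\phi_T(F(X,Y))=T\,F(X,Y)+F(X,Y)^q$, that is
\[
F(X,\phi_T(Y))=\sum_{k\geq 0}T\Psi_k(X)Y^{q^k}+\sum_{k\geq 0}\Psi_k(X)^qY^{q^{k+1}}.
\]
Comparing the coefficients of $Y^{q^{k+1}}$ in the two expansions produces $T^{q^{k+1}}\Psi_{k+1}(X)+\Psi_k(X)=T\Psi_{k+1}(X)+\Psi_k(X)^q$, which rearranges to the claimed recursion $\Psi_{k+1}(X)=(\Psi_k(X)^q-\Psi_k(X))/(T^{q^{k+1}}-T)$.

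The computations themselves are short; the only point requiring care is the bookkeeping for the formal substitutions, namely checking that composing $e_C$ with $\log_C$ and substituting power series of positive order is valid in the relevant two-variable power series ring, and that the Frobenius $q^i$-th power distributes over the infinite sums (valid since $q$ is a power of $p$). As an alternative to part 2), one can instead insert the explicit formula $\Psi_k(X)=\sum_{i=0}^k X^{q^i}/(D_i L_{k-i}^{q^i})$ and verify the recursion coefficient by coefficient using $D_j=(T^{q^j}-T)D_{j-1}^q$ and $L_j=(T-T^{q^j})L_{j-1}$; this avoids the generating-function manipulations at the cost of a slightly longer computation.
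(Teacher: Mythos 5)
Your proof is correct and follows essentially the same route as the paper: both identities are extracted from the generating function $e_C(X\log_C(Y))$ together with the functional equations of $e_C$, $\log_C$ and $\phi_T$, by comparing coefficients of $q$-power monomials. The only (immaterial) difference is that the paper specializes the first variable to $a\in A$ and uses $e_C(aZ)=\phi_a(e_C(Z))$ and $\phi_a\circ\phi_T=\phi_T\circ\phi_a$, recovering the polynomial identity in two indeterminates from its validity at all $a$, whereas you keep both variables formal throughout.
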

\begin{proof} ${}$\par
\noindent 1) Recall that we have seen that :
$$\forall a\in A,\, \phi_a(X)=\sum_{k\geq 0} \Psi_k(a) X^{q^k}.$$
Furthermore, for $a\in A :$
$$e_C(aXlog_C(Y))=\phi_a(e_C(Xlog_C(Y))).$$
Thus, for all $a\in A:$
$$\forall k\geq 0,\, \Psi_k(aX)=\sum_{i=0}^{k}\Psi_i(a)\Psi_{k-i}(X)^{q^i}.$$
The first assertion of the lemma follows.\par
\noindent 2) For all $a\in A,$ we have :
$$\phi_a(TX+X^q)=T\phi_a(X)+\phi_a(X)^q.$$
Thus, for all $a\in A :$
$$\forall k\geq 0, \, \psi_{k+1}(a)=\frac{\Psi_k(a)^q-\Psi_k(a)}{T^{q^{k+1}}-T}.$$
\end{proof}
\begin{lemma}
\label{lemma3}
\noindent Let $P$ be a prime of $A$ of degree $d.$ We have :
$$\phi_P(X)=\sum_{k=0}^d [^{P}_k]X^{q^k},$$
where $[^{P}_0]=P$ and $[^{P}_d]=1.$ Then, for $k=0, \cdots , d-1,$ $P$ divides $[^{P}_k]$ and :
$$\frac{[^{P}_k]}{P}\equiv \frac{1}{L_k}\pmod{P}.$$
\end{lemma}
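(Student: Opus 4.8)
The plan is to identify the coefficient $[^{P}_k]$ with a value of the Anderson--Thakur polynomial $\Psi_k$ and then reduce modulo $P^2$. As recalled in the proof of Lemma~\ref{lemma2}, for every $a\in A$ one has $\phi_a(X)=\sum_{k\geq 0}\Psi_k(a)X^{q^k}$; comparing this with the displayed expansion of $\phi_P$ gives the identity $[^{P}_k]=\Psi_k(P)$. Feeding $X=P$ into the explicit formula for $\Psi_k$ then yields
$$[^{P}_k]=\Psi_k(P)=\sum_{i=0}^{k}\frac{P^{q^i}}{D_i\,L_{k-i}^{q^i}}.$$
The guiding idea is that, for $0\leq k\leq d-1$, each coefficient $1/(D_iL_{k-i}^{q^i})$ is prime to $P$, while each factor $P^{q^i}$ with $i\geq 1$ is divisible by $P^2$; hence only the $i=0$ term, namely $P/L_k$, survives modulo $P^2$.

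The first key step is the coprimality claim: I must verify that $P$ divides neither $D_i$ nor $L_{k-i}$ whenever $0\leq i\leq k\leq d-1$. Writing $L_k=\prod_{j=1}^{k}(T-T^{q^j})$ and $D_i=\prod_{j=1}^{i}(T^{q^j}-T)^{q^{i-j}}$, this reduces to deciding when $P$ divides $T^{q^j}-T$. Since $T^{q^j}-T$ is the product of all monic irreducibles of degree dividing $j$, the prime $P$ of degree $d$ divides it precisely when $d\mid j$; and for $1\leq j\leq k\leq d-1$ we have $d\nmid j$. This bookkeeping is the only delicate point, and it is exactly here that the hypothesis $k\leq d-1$ is used.

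Granting the coprimality, let $v_P$ denote the $P$-adic valuation on $k$ normalized by $v_P(P)=1$. Each term with $i\geq 1$ then has $v_P$-value $q^i\geq q\geq 3$, so
$$\Psi_k(P)\equiv \frac{P}{L_k}\pmod{P^2},$$
while the surviving term $P/L_k$ has $v_P$-value exactly $1$. Since $[^{P}_k]=\Psi_k(P)$ lies in $A$ and satisfies $v_P([^{P}_k])\geq 1$, this forces $P\mid [^{P}_k]$ in $A$. Dividing the congruence by $P$ gives $v_P\big([^{P}_k]/P-1/L_k\big)\geq 1$, and as both sides are $P$-integral (here $1/L_k$ makes sense modulo $P$ precisely because $P\nmid L_k$) this is the asserted congruence $[^{P}_k]/P\equiv 1/L_k\pmod{P}$. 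The main obstacle is therefore the combinatorial bookkeeping of which factors $T^{q^j}-T$ are divisible by $P$; once that is in place, the reduction modulo $P^2$ is immediate.
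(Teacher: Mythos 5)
Your proof is correct. It starts from the same identification $[^{P}_k]=\Psi_k(P)$ as the paper, but then takes a different route: the paper deduces the congruence from the second assertion of Lemma~\ref{lemma2}, i.e.\ from the recursion $\Psi_{k+1}(X)=(\Psi_k(X)^q-\Psi_k(X))/(T^{q^{k+1}}-T)$, which yields an induction on $k$ --- starting from $\Psi_0(P)=P$, the $q$th-power term vanishes modulo $P^2$, and the denominator $T^{q^{k+1}}-T$ is prime to $P$ for $k+1\le d-1$ and supplies exactly the factor $-1/(T^{q^{k+1}}-T)$ needed to pass from $1/L_k$ to $1/L_{k+1}$. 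You instead substitute $X=P$ into the closed-form expansion $\Psi_k(X)=\sum_{i=0}^{k}X^{q^i}/(D_iL_{k-i}^{q^i})$ and eliminate every term with $i\geq 1$ by a valuation count. Both arguments rest on the same arithmetic input --- that $P\nmid T^{q^j}-T$ for $1\le j\le d-1$, because $T^{q^j}-T$ is the product of the monic irreducibles of degree dividing $j$ --- and you make this bookkeeping explicit where the paper leaves it implicit. The inductive route is shorter on the page because Lemma~\ref{lemma2} is already in hand; your direct route is self-contained given the closed form for $\Psi_k$ from Section~2.1, and it has the small bonus of showing that $v_P([^{P}_k])$ equals $1$ exactly, since the discarded terms have valuation $q^i\ge q$ rather than merely $\ge 2$.
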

\begin{proof} Since $[^{P}_k]=\Psi_k(P),$ the lemma follows from the second assertion of lemma \ref{lemma2}.
\end{proof}
\noindent If we combine lemma \ref{lemma1} and lemma \ref{lemma3}, we get :
\begin{corollary}
\label{corollary1}
${}$\par
\noindent Let $P$ be a prime of $A$ of degree $d.$ Then :
$$\phi_{P-1}(1)\equiv PB(q^d-2)\pmod{P^2}.$$
\end{corollary}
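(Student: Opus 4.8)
The plan is to reduce the statement to a direct combination of Lemma \ref{lemma1} and Lemma \ref{lemma3}. First I would exploit that $\phi$ is a morphism of $\mathbb{F}_q$-algebras, hence additive, together with $\phi_1=\mathrm{id}$, to write $\phi_{P-1}(X)=\phi_P(X)-X$. Evaluating at $X=1$ and inserting the expansion $\phi_P(X)=\sum_{k=0}^{d}[^{P}_k]X^{q^k}$ furnished by Lemma \ref{lemma3} gives
$$\phi_{P-1}(1)=\sum_{k=0}^{d}[^{P}_k]-1.$$
Since $[^{P}_0]=P$ and $[^{P}_d]=1$, the two boundary terms collapse and one is left with $\phi_{P-1}(1)=P+\sum_{k=1}^{d-1}[^{P}_k]$.

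Next I would factor out $P$. By Lemma \ref{lemma3} the prime $P$ divides each $[^{P}_k]$ for $1\le k\le d-1$, while $[^{P}_0]/P=1=1/L_0$; hence
$$\phi_{P-1}(1)=P\sum_{k=0}^{d-1}\frac{[^{P}_k]}{P},$$
where every summand $[^{P}_k]/P$ lies in $A$. This is the structural heart of the argument: $\phi_{P-1}(1)$ is visibly a multiple of $P$, so to determine it modulo $P^2$ it suffices to compute $\sum_{k=0}^{d-1}[^{P}_k]/P$ modulo $P$. Lemma \ref{lemma3} again supplies this, via $[^{P}_k]/P\equiv 1/L_k\pmod P$ — a congruence that makes sense because $P\nmid L_k$ for $k\le d-1$, as $P$ has degree $d$ and $L_k$ is a product of factors $T-T^{q^i}$ with $i\le d-1$. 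Thus
$$\phi_{P-1}(1)\equiv P\sum_{k=0}^{d-1}\frac{1}{L_k}\pmod{P^2}.$$

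Finally I would identify the sum with the Bernoulli–Goss number. Applying Lemma \ref{lemma1} with $c=2$ — admissible precisely because $q\ge 3$ places $2$ in $\{2,\dots,q-1\}$ — yields $B(q^d-2)\equiv\sum_{k=0}^{d-1}1/L_k^{\,c-1}=\sum_{k=0}^{d-1}1/L_k\pmod P$. Multiplying this congruence through by $P$ promotes it to a congruence modulo $P^2$, and comparing with the previous display gives exactly $\phi_{P-1}(1)\equiv PB(q^d-2)\pmod{P^2}$.

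I do not anticipate a genuine obstacle: the argument is purely formal once the two lemmas are available. The only points demanding care are bookkeeping ones — correctly isolating the boundary coefficients $[^{P}_0]=P$ and $[^{P}_d]=1$ when extracting the factor of $P$, verifying that each $[^{P}_k]/P$ is an honest element of $A$ so that the reduction modulo $P$ (and therefore the final congruence modulo $P^2$) is well defined, and observing that the standing hypothesis $q\ge 3$ is exactly what legitimizes the choice $c=2$ in Lemma \ref{lemma1}.
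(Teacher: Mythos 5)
Your proof is correct and is precisely the argument the paper intends: the paper's "proof" is the single remark that the corollary follows by combining Lemma \ref{lemma1} and Lemma \ref{lemma3}, and your write-up simply makes that combination explicit (the decomposition $\phi_{P-1}(1)=P\sum_{k=0}^{d-1}[^{P}_k]/P$, the congruence $[^{P}_k]/P\equiv 1/L_k\pmod P$, and Lemma \ref{lemma1} with $c=2$). The points of care you flag, in particular that $q\ge 3$ is what allows $c=2$, are exactly the right ones.
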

\begin{remark} D. Thakur has informed the authors that the  congruence in corollary  \ref{corollary1} was already observed by him   in \cite{THA}. 
\end{remark}

\subsection{The unit module for $\mathbb F_{q^n }[T]$}${}$\par
\noindent  Set $k_n=\mathbb F_{q^n}(T)$ and $A_n=\mathbb F_{q^n}[T].$ In this paragraph we will determine $U(A_n)$ and $H(A_n).$ We have :
$$k_{n,\infty}=k_n\otimes _k k_{\infty}=\mathbb F_{q^n }((\frac{1}{T})).$$
Let $\varphi$ be the Frobenius of $\mathbb F_{q^n}/\mathbb F_q,$ recall that $k_n/k$ is a cyclic extension of degree $n$ and its Galois group is generated by $\varphi.$ Set $G={\rm Gal}(k_n/k)$ and let $\alpha \in \mathbb F_{q^n}$ which generates a normal basis of $\mathbb F_{q^n}/\mathbb F_q.$ Then $A_n$ is a free $A[G]$-module of rank one generated by $\alpha .$ Note that :
$$k_{n,\infty} =A_n\oplus  \frac{1}{T} \mathbb F_{q^n}[[\frac{1}{T}]].$$
By the results of paragraph 1.1 :
$$log_C(\alpha) \in \mathbb F_{q^n}[[\frac{1}{T}]]^*,$$
and :
$$e_C(\frac{1}{T} \mathbb F_{q^n}[[\frac{1}{T}]]) = \frac{1}{T} \mathbb F_{q^n}[[\frac{1}{T}]].$$
Now :
$$k_{n,\infty}=\oplus_{i=0}^{n-1}k_{\infty}log_C(\alpha^{q^i}).$$
Thus :
$$k_{n,\infty}= \frac{1}{T} \mathbb F_{q^n}[[\frac{1}{T}]]\oplus (\oplus_{i=0}^{n-1} A\, log_C(\alpha^{q^i})).$$
Let $\mathfrak S_n(A)$ be the sub-$A$-module of $C(A_n)$ generated by $\mathbb F_{q^n},$ then $\mathfrak S_n(A)$ is a free $A$-module of rank $n$ generated by $\{ \alpha, \alpha ^q,\cdots , \alpha^{q^{n-1}} \}.$ We have  :
$$e_C(k_{n,\infty})= \mathfrak S_n(A)\oplus  \frac{1}{T} \mathbb F_{q^n}[[\frac{1}{T}]].$$
Thus :
$$U(A_n)=A_n\cap e_C(k_{n,\infty})= \mathfrak S_n (A),$$
and :
$$H(A_n)=\frac  {C(k_{n,\infty})}{ C(A_n)+e_C(k_{n,\infty})}=\{ 0\}.$$
In particular, for $n=1,$ we get $U(A)=\mathfrak S_1(A)=$ the free $A$-module of rank one generated (via $\phi$) by $1$ and $H(A)=\{ 0\}.$\par
\noindent Let $F\in k_{\infty}[G]$ be defined by :
$$F=\sum_{i= 0}^{n-1} (\sum_{j\equiv i\pmod{n}}\frac{1}{L_j} )\varphi ^i.$$
Then :
$$e_C^{-1} (A_n) =\oplus_{i=0}^{n-1} A\,  log_C(\alpha ^{q ^i}) =F A_n.$$
Write $n=mp^{\ell},$ where $\ell \geq 0$ and $m\not \equiv 0\pmod{p}.$ Let $\mu_{m}=\{ x\in \bC_{\infty} ,\, x^m=1\}$ which is a cyclic group of order $m.$ Then we can compute  Taelman's regulator (just calculate the "determinant" of $F$) :
$$[A_n:e_C^{-1} (A_n)]= ((-1)^{m-1}\prod_{\zeta\in \mu_m} (\sum_{i= 0}^{n-1} (\sum_{j\equiv i\pmod{n}}\frac{1}{L_j} )\zeta^i))^{p^{\ell}}.$$
Thus, Taelman's class number formula becomes  in this case :
$$\zeta_{A_n}(1)=((-1)^{m-1}\prod_{\zeta\in \mu_m} (\sum_{i= 0}^{n-1} (\sum_{j\equiv i\pmod{n}}\frac{1}{L_j} )\zeta^i))^{p^{\ell}}.$$
In particular, we get the following formula already known by Carlitz :
$$\zeta_A(1)=log_C(1).$$
\subsection {The $P$-adic behavior of "$1$"}${}$\par
\noindent Let $P$ be a prime of $A$ of degree $d.$ Let $\bC_P$ be a completion of an algebraic closure of the $P$-adic completion of $k.$ Let $v_P$ be the valuation on $\bC_P$ such that $v_P(P)=1.$ For $x\in \mathbb R,$ we denote the integer part of $x$ by $[x].$ Let $i\in \mathbb N\setminus \{0\}$ and observe that $v_P(T^{q^i}-T)= 1$ if $d$ divides $i$ and $v_P(T^{q^i}-T)= 0$ otherwise. Therefore :\par
\noindent - for $i\geq 0,$ $v_P(L_i)=[i/d],$\par
\noindent - for $i\geq 0,$ $v_P(D_i) = \frac{q^i - q^{i-[i/d]d}}{q^d-1}.$\par
\noindent This implies that $log_C(\alpha)$ converges for $\alpha \in \bC_P$ such that $v_P(\alpha )>0,$ and that $e_C(\alpha )$ converges for $\alpha \in \bC_P$ such that $v_P(\alpha )> \frac{1}{q^d-1}.$ Furthermore, for $\alpha \in \bC_P$ such that $v_P(\alpha)>\frac{1}{q^d-1},$ we have :\par
\noindent - $v_P(e_C(\alpha ))=v_P(log_C(\alpha ))=v_P(\alpha),$\par
\noindent - $e_C(log_C(\alpha ))=log_C(e_C (\alpha ))=\alpha .$\par
\begin{lemma}
\label{lemma4}
Let $A_P$ be the $P$-adic completion of $A.$ There exists $x\in A_P$ such that $\phi_P(x)=\phi_{P-1}(1)$ if and only if $\phi_{P-1}(1) \equiv 0\pmod{P^2}.$
\end{lemma}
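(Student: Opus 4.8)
The plan is to convert the equation $\phi_P(x)=\phi_{P-1}(1)$ into an inequality on $v_P$ by means of the $P$-adic maps $e_C$ and $log_C$ recalled just before the statement, which on the disc $\{v_P>\frac{1}{q^d-1}\}$ converge, preserve $v_P$, are mutually inverse, and satisfy $e_C(P\alpha)=\phi_P(e_C(\alpha))$. Write $c=\phi_{P-1}(1)$. I would first record two preliminary facts. By Corollary \ref{corollary1}, $c\equiv PB(q^d-2)\pmod{P^2}$, so in particular $v_P(c)\geq 1$; since $q\geq 3$ and $d\geq 1$ give $\frac{1}{q^d-1}<1$, this already places $c$ inside the disc of convergence, and, $v_P$ being integer valued on $A_P$, the condition $c\equiv 0\pmod{P^2}$ is equivalent to $v_P(c)\geq 2$. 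Secondly, Lemma \ref{lemma3} shows that every coefficient of $\phi_P(X)=\sum_{k=0}^d[^{P}_k]X^{q^k}$ except the leading one is divisible by $P$, so that $\phi_P(X)\equiv X^{q^d}\pmod P$.

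For the implication from $c\equiv 0\pmod{P^2}$ to solvability, I would set $w=log_C(c)$, so that $v_P(w)=v_P(c)\geq 2$, and define $x=e_C(w/P)$. Then $v_P(w/P)=v_P(c)-1\geq 1>\frac{1}{q^d-1}$ keeps every term inside the disc of convergence, $x$ lies in $A_P$ because $v_P(x)=v_P(c)-1\geq 0$, and $\phi_P(x)=e_C(P\cdot\frac{w}{P})=e_C(w)=c$.

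For the converse, the point on which the whole argument turns is that any solution $x\in A_P$ is automatically forced into the disc of convergence. Indeed, from $\phi_P(x)=c\equiv 0\pmod P$ and $\phi_P(X)\equiv X^{q^d}\pmod P$ one gets $x^{q^d}\equiv 0\pmod P$, whence $x\equiv 0\pmod P$ because $A_P/PA_P=\mathbb F_{q^d}$ is a field; thus $v_P(x)\geq 1>\frac{1}{q^d-1}$. Applying $log_C$ and using $\phi_P(x)=\phi_P(e_C(log_C(x)))=e_C(P\,log_C(x))$ then yields $v_P(c)=v_P(P\,log_C(x))=1+v_P(x)\geq 2$, i.e. $c\equiv 0\pmod{P^2}$. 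The main obstacle is precisely this last step: a priori a solution $x$ could be a $P$-adic unit, lying outside the disc on which $e_C$ and $log_C$ are defined and mutually inverse, so that the transcendental computation would break down; it is the reduction $\phi_P(X)\equiv X^{q^d}\pmod P$ that rules this out and makes the argument go through.
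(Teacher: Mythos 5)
Your proof is correct, and your ``if'' direction coincides with the paper's: both set $x=e_C(\frac{1}{P}log_C(\phi_{P-1}(1)))$ and check that all the relevant quantities stay in the disc of convergence so that the valuation identities and the functional equation $e_C(P\alpha)=\phi_P(e_C(\alpha))$ apply. The ``only if'' direction is where you genuinely diverge. The paper observes that when $v_P(\phi_{P-1}(1))=1$ the polynomial $\phi_P(X)-\phi_{P-1}(1)$ is an Eisenstein polynomial over $A_P$ (constant term of valuation exactly $1$, all coefficients $[^{P}_k]$ for $k<d$ divisible by $P$ by Lemma \ref{lemma3}, leading coefficient $1$), hence irreducible and without roots in $A_P$; this is purely algebraic and requires no discussion of where a hypothetical root might lie. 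You instead prove the contrapositive-free implication directly: any root $x\in A_P$ must satisfy $x\equiv 0\pmod{P}$, via the reduction $\phi_P(X)\equiv X^{q^d}\pmod{P}$ and the fact that $A_P/PA_P$ is a field, and only then is it legitimate to apply $log_C$ and conclude $v_P(\phi_{P-1}(1))=1+v_P(x)\geq 2$. Both arguments rest on the same input (Lemma \ref{lemma3}); yours is slightly longer but uniform, everything passing through $e_C$ and $log_C$, and it yields the extra information that any solution satisfies $v_P(x)=v_P(\phi_{P-1}(1))-1$, whereas the paper's Eisenstein observation is shorter and avoids convergence questions entirely in that direction. The obstacle you single out --- that a solution could a priori be a $P$-adic unit, outside the domain where $e_C$ and $log_C$ are mutually inverse --- is real, and your reduction modulo $P$ disposes of it correctly.
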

\begin{proof}${}$\par
 \noindent First assume that $\phi_{P-1}(1)\not \equiv 0\pmod{P^2}.$ By lemma \ref{lemma3}, we have that  $v_P(\phi_{P-1}(1))=1,$ and therefore $\phi_P(X)-\phi_{P-1}(1) \in A_P[X]$ is an Eisenstein polynomial. In particular there $\phi_{P-1}(1) \not \in \phi_P(A_P).$\par
 \noindent Now, let's assume that $\phi_{P-1}(1)\equiv 0\pmod{P^2}.$ Then $v_P(log_C (\phi_{P-1}(1)))=v_P(\phi_{P-1}(1)).$ Therefore, there exists $y\in P A_P$ such that :
 $$log_C (\phi_{P-1}(1))=Py.$$
 Set $x=e_C(y) \in PA_P.$ We have :
 $$\phi_P(x) = e_C(Py)=e_C(log_C (\phi_{P-1}(1)))= \phi_{P-1}(1).$$
\end{proof}
\begin{remark} Since $1$ is an  Anderson's special point for the Carlitz module, the above  lemma can also be deduced by corollary \ref{corollary1} and  the work of G. Anderson in \cite{AND}. \end{remark}

\section{Hilbert class fields and the unit module for $\mathbb F_q[T]$}
${}$\par
\noindent Let $P$ be a prime of $A$ of degree $d.$ Recall that $K_P$ is the $P$th-cyclotomic function field, i.e. the finite extension of $k$ obtained by adjoining to $k$ the $P$th-torsion points of the Carlitz module. Let $R_P$ be the integral closure of $A$ in $K_P$ and let $\Delta$ be the Galois group of $K_P/k.$ Recall that $\Delta$ is a cyclic group of order $q^d-1$ (see paragraph 1.2). Recall that the unit module $U(A)$ is the free $A$-module (via $\phi$) generated by $1$ (see paragraph 2.2).\par
\subsection{Kummer theory}${}$\par
\noindent 
We will need the following lemma:\par
\begin{lemma}
\label{lemma5}
The natural morphism of $A$-modules :$\frac{U(A)}{P.U(A)}\rightarrow \frac{C(K_P)}{P.C(K_P)}$ induced by the inclusion $U(A)\subset C(K_P),$ is an injective map. 
\end{lemma}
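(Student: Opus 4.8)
The plan is to reduce the injectivity to a concrete non-solvability statement, and then to settle that statement by a local analysis at $P$.

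First I would use that $U(A)$ is free of rank one over $A$ (via $\phi$), generated by $1$, so that $\frac{U(A)}{P\cdot U(A)}$ is a one-dimensional vector space over the field $\frac{A}{PA}\simeq \mathbb{F}_{q^d}$, generated by the class of $1$. The map of the lemma is $\frac{A}{PA}$-linear, hence injective if and only if it does not kill this generator; that is, if and only if $1\notin \phi_P(C(K_P))=\phi_P(K_P)$. Thus the whole lemma reduces to showing that $\phi_P(x)=1$ has no solution $x\in K_P$.

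Next, arguing by contradiction, suppose $x\in K_P$ satisfies $\phi_P(x)=1$. Since $\Lambda_P\subset K_P$, the full set of roots of $h(X):=\phi_P(X)-1$ is $x+\Lambda_P\subset K_P$, so $h$ splits completely in $K_P$. Using the $\mathbb{F}_q$-linearity of $\phi_P$ and the identity $\phi_{P-1}=\phi_P-\mathrm{id}$, the substitution $X=1+Y$ gives
$$h(1+Y)=\phi_P(Y)+\phi_{P-1}(1)=:g(Y),$$
an additive polynomial whose constant term is $\phi_{P-1}(1)$. By lemma \ref{lemma3} the coefficients $[^{P}_k]$ for $0\le k\le d-1$ are divisible by $P$, and $\phi_{P-1}(1)$ is a nonzero element of $A$ (nonzero because $U(A)$ is free) that is divisible by $P$. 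I would then read off the $P$-adic Newton polygon of $g$ over the fraction field $k_P=\mathrm{Frac}(A_P)$ of the $P$-adic completion, distinguishing two cases according to $v_0:=v_P(\phi_{P-1}(1))\ge 1$. If $v_0=1$, then $g$ is Eisenstein over $k_P$, hence irreducible of degree $q^d$; but each root lies in the completion $K_{P,\mathfrak P}$ at the unique prime above $P$, which by paragraph 1.2 is totally ramified of degree $q^d-1<q^d$ over $k_P$, so the root has degree $\le q^d-1$ over $k_P$ — a contradiction. If $v_0\ge 2$, the Newton polygon of $g$ has a segment of horizontal length one, so $g$ has a root $\rho\in k_P$; since all roots of $g$ lie in $K_P$, we get $\rho\in K_P\cap k_P$, and this intersection equals $k$ because $K_P/k$ is totally ramified at $P$. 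Hence $\rho\in k$ is integral over $A$, so $\rho\in A$ and $x':=1+\rho\in A$ satisfies $\phi_P(x')=1$. This is impossible: a nonconstant solution would have $T$-degree $\ge q^d$ on the left, while a constant one would force $\phi_P(1)=\phi_{P-1}(1)+1\in\mathbb{F}_q$, contradicting $\deg_T\phi_{P-1}(1)\ge d$.

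The main obstacle is the second case $v_0\ge 2$ — precisely the ``irregular'' primes with $\phi_{P-1}(1)\equiv 0\pmod{P^2}$ singled out by corollary \ref{corollary1} — where there is no local ramification obstruction and one must instead descend to a genuinely rational, indeed integral, solution. The two technical points demanding care are the identity $K_P\cap k_P=k$ (which follows from total ramification at $P$: a subfield of $K_P$ embedding into $k_P$ is simultaneously unramified and totally ramified over $k$ at $P$, hence trivial) and the degree bookkeeping showing that $\phi_P(X)=1$ has no solution in $A$.
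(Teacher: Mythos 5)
Your proof is correct, but it takes a genuinely different route from the paper's. The paper argues by a trace (corestriction) argument: if $x\in U(A)$ and $x=\phi_P(z)$ with $z\in K_P$, then $z$ is integral and, using the $A$-divisibility of $e_C(K_{P,\infty})$, one checks $z\in U(R_P)$; since $\Tr_{K_P/k}$ carries $U(R_P)$ into $U(A)$ and $[K_P:k]=q^d-1\equiv -1\pmod p$, one gets $\phi_P(\Tr(z))=\Tr(x)=-x$, whence $x\in P.U(A)$. That argument is three lines, works at the level of Taelman's modules without ever using that $U(A)$ is generated by $1$, and its only real input is that the degree of $K_P/k$ is prime to $p$. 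You instead exploit the explicit description $U(A)=\phi_A(1)$ to reduce to the non-solvability of $\phi_P(x)=1$ in $K_P$, and then settle that by a purely local analysis at $P$: the Eisenstein case $v_P(\phi_{P-1}(1))=1$ is killed by comparing $q^d$ with the local degree $q^d-1$ of the totally ramified prime, and the case $v_P(\phi_{P-1}(1))\ge 2$ by descending a length-one Newton polygon segment to a root in $k_P\cap K_P=k$, hence in $A$, which is excluded by degree counting. Your approach is longer and tied to the rank-one generator, but it buys something the paper's does not: it avoids the divisibility/trace machinery entirely, it makes visible exactly where the dichotomy of Lemma \ref{lemma4} (ramified versus unramified at $P$) enters, and in the second case it proves the slightly stronger fact that a $K_P$-rational solution of $\phi_P(x)=1$ would have to be integral over $A$. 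All the individual steps check out (the reduction to the generator, the identity $h(1+Y)=\phi_P(Y)+\phi_{P-1}(1)$, the non-vanishing and $P$-divisibility of $\phi_{P-1}(1)$, the Newton polygon with vertices at $(0,v_0)$ and $(1,1)$, and the use of total ramification to force $k(\rho)=k$), so I see no gap.
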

\begin{proof} Recall that $K_{P,\infty}=K_P\otimes_kk_{\infty}.$ Let $Tr: K_{P,\infty}\rightarrow k_{\infty}$ be the trace map. Now let $x\in U(A) \cap P.C(K_P).$ 
Then there exits $z\in K_P$ such that $\phi_P(z)=x.$ Since $e_C(K_{P,\infty})$ is $A$-divisible,  we get that $z\in U(R_P).$  Thus $Tr(z) \in U(A).$ But :
$$-x=\phi_P(Tr(z)).$$
Therefore $x\in P.U(A).$
\end{proof}
\noindent  Let $\mathfrak U=\{ z\in \bC_{\infty},\, \phi_P(z)\in U(A)\}.$ Then $\mathfrak U$ is an $A$-module (via $\phi $) and $P.\mathfrak U =U(A).$ Therefore the multiplication by $P$ gives rise to the following exact sequence of $A$-module:
$$0\rightarrow \Lambda_P\oplus U(A) \rightarrow \mathfrak U \rightarrow \frac{U(A)}{P.U(A)}\rightarrow 0.$$
Set $\gamma =e_C(\frac{P-1}{P} log_C(1)).$ Then $\gamma \in \mathfrak U.$ Set $L=K_P(\mathfrak U).$ By the above exact sequence, we observe that :
$$L= K_P(\gamma ).$$
Furthermore  $L/k$ is a Galois extension and we set : $G={\rm Gal}(L/K_P)$ and $\mathfrak G ={\rm Gal}(L/k).$
Let $\delta \in \Delta$ and select $\widetilde {\delta} \in \mathfrak G$ such that the restriction of $\widetilde {\delta}$ to $K_P$ is equal to $\delta.$ Let $g\in G, $ then $\widetilde {\delta} g\widetilde {\delta}^{-1} \in G$ does not depend on the choice of $\widetilde {\delta}.$ Therefore $G$ is a $\mathbb F_p[\Delta]$ -module.
\begin{lemma}
\label{lemma6} We have a natural isomorphism of $\mathbb F_p[\Delta]$-modules :
$$G\simeq {\rm Hom}_A(\frac{U(A)}{P.U(A)}, \Lambda_P).$$

\end{lemma}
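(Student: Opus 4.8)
The plan is to construct an explicit Kummer pairing out of the multiplication-by-$P$ sequence and then to deduce surjectivity from the simplicity of $\Lambda_P$ as an $\mathbb{F}_p[\Delta]$-module, the only substantial input being Lemma \ref{lemma5}.

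First I would define, for $g\in G$ and $\bar u\in U(A)/P\cdot U(A)$, the element $\langle g,\bar u\rangle:=g(z)-z$, where $z\in\mathfrak U$ is any solution of $\phi_P(z)=u$ for a lift $u\in U(A)$ of $\bar u$ (such $z$ exists since $\phi_P\colon\mathfrak U\to U(A)$ is onto). Because $u\in A\subset K_P$ is fixed by $g\in G$, one has $\phi_P(g(z)-z)=g(\phi_P(z))-\phi_P(z)=0$, so $\langle g,\bar u\rangle\in\Lambda_P$. I would then verify this is well defined: changing $z$ alters it by an element of $\Lambda_P\subset K_P$, and changing the lift $u$ alters $z$ by an element of $U(A)\subset A\subset K_P$, both fixed by $g$. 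The map is $A$-linear in $\bar u$ and, since all correction terms lie in $\Lambda_P\subset K_P$, a homomorphism in $g$; thus $g\mapsto\langle g,-\rangle$ defines a homomorphism $\Theta\colon G\to\Hom_A(U(A)/P\cdot U(A),\Lambda_P)$. Injectivity is immediate: if $\langle g,-\rangle=0$ then $g$ fixes every element of $\mathfrak U$, hence fixes $L=K_P(\mathfrak U)$, so $g=1$.

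Next I would pin down the $\mathbb{F}_p[\Delta]$-structures. Since $\Delta$ fixes $U(A)\subset C(A)$ pointwise it acts trivially on $U(A)/P\cdot U(A)$, so $\Hom_A(U(A)/P\cdot U(A),\Lambda_P)$ carries the action induced from $\Lambda_P$, on which $\sigma_a$ acts by $\phi_a$. With $z_1:=1-\gamma\in\mathfrak U$ one has $\phi_P(z_1)=\phi_P(1)-\phi_{P-1}(1)=1$, so $z_1$ lifts the generator $\bar 1$ and $L=K_P(z_1)$. Writing $\tilde\delta^{-1}(z_1)=z_1+\mu$ with $\mu\in\Lambda_P$ for a lift $\tilde\delta$ of $\delta=\sigma_a$, a short computation using $\tilde\delta\phi_c=\phi_c\tilde\delta$ gives $\tilde\delta g\tilde\delta^{-1}(z_1)-z_1=\phi_a(g(z_1)-z_1)$, i.e. $\Theta$ is $\mathbb{F}_p[\Delta]$-equivariant.

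The heart of the proof is surjectivity, which I would obtain without computing $[L:K_P]$. As $U(A)/P\cdot U(A)\cong A/PA$ is generated by $\bar 1$, evaluation at $\bar 1$ identifies the image of $\Theta$ with the subgroup $H:=\{g(z_1)-z_1:g\in G\}\subset\Lambda_P$, which by the equivariance above is an $\mathbb{F}_p[\Delta]$-submodule. Now $\Lambda_P\cong A/PA$ is simple as an $\mathbb{F}_p[\Delta]$-module, since $\Delta\cong(A/PA)^*$ acts by multiplication and hence transitively on the nonzero elements; therefore $H$ is $0$ or all of $\Lambda_P$. Finally $H\neq 0$: by Lemma \ref{lemma5} the class $\bar 1\neq 0$ remains nonzero in $C(K_P)/P\cdot C(K_P)$, i.e. $1\notin\phi_P(K_P)$, whence $z_1\notin K_P$ and $G\neq 1$. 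Thus $H=\Lambda_P$ and $\Theta$ is onto. The one real obstacle is this last nonvanishing—that $1$ is not a $\phi_P$-power in $C(K_P)$—which is exactly Lemma \ref{lemma5}; the simplicity of $\Lambda_P$ then promotes this single nonvanishing to full surjectivity.
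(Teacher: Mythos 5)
Your proof is correct and follows essentially the same route as the paper: the same Kummer pairing $\langle g,\bar u\rangle = g(z)-z$ on $\mathfrak U$, with Lemma \ref{lemma5} as the key non-degeneracy input and the same verification of $\Delta$-equivariance. If anything you are more explicit than the paper at the final step: you deduce surjectivity from the simplicity of $\Lambda_P$ as an $A/PA[\Delta]$-module together with the nonvanishing $1\notin \phi_P(K_P)$ supplied by Lemma \ref{lemma5}, whereas the paper stops at asserting that the pairing $\frac{U(A)}{P.U(A)}\times G\rightarrow \Lambda_P$ is non-degenerate and $\Delta$-equivariant, leaving that last counting/simplicity step implicit.
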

\begin{proof} ${}$\par
\noindent Recall that the multiplication by $P$ induces an $A$-isomorphism :
$$\frac{\mathfrak U}{\Lambda_P\oplus U(A)}\simeq \frac{U(A)}{P.U(A)}.$$
For $z\in \mathfrak U$ and $g\in G,$ set :
$$<z,g> = z-g(z) \in \Lambda_P.$$
One can verify that :\par
\noindent - $ \forall z_1,z_2\in \mathfrak U, $ $\forall g\in G,$ $<z_1+z_2, g>= <z_1, g>+<z_2, g>,$\par
\noindent - $\forall z\in \mathfrak U,$  $ \forall g_1,g_2 \in G,$ $ <z, g_1g_2>= <z,g_1>+<z,g_2>,$\par
\noindent - $ \forall z\in \mathfrak U, \forall a \in A, \forall g\in G,$ $<\phi_a(z),g>=\phi_a(<z,g>),$\par
\noindent - $\forall z\in \mathfrak U,\forall g\in G, \forall \delta \in \Delta,$ $<\widetilde  \delta (z), \delta.g>= \delta(<z,g>),$ where $\widetilde \delta \in \mathfrak G$ is such that its restriction to $K_P$ is equal to $\delta,$\par
\noindent - let $g\in G$ then : $<z,g>=0$  $\forall z\in \mathfrak U$ if and only if $g=1.$\par
\noindent Let $z\in \mathfrak U$ be such that $<z,g>=0$ $\forall g\in G.$ Then $z\in \mathfrak U^G.$ Thus $z\in K_P$ and $\phi_P(z)\in U(A).$ Thus, by lemma \ref{lemma5}, we get $\phi_P(z)\in P.U(A),$ and therefore $z\in \Lambda_P\oplus U(A).$ \par

\noindent We deduce from above that $<.,.>$ induces a non-degenerated and $\Delta$-equivariant  bilinear map :
$$\frac {U(A)}{P.U(A)}\times G \rightarrow \Lambda_P.$$

\end{proof}

\subsection{Class groups}${}$\par
\noindent We will need the following result wich is implicit in \cite{TAE3} :
\begin{proposition}
\label{proposition1}
Let $Cl^0(K_P)$ be the group of classes of divisors of degree zero of $K_P.$ Then there exists an $\frac{A}{PA}[\Delta]$-morphism :
$${\rm Hom}_A(H(R_P), \Lambda_P)\rightarrow Cl^0(K_P)\otimes_{\mathbb Z} \frac{A}{PA},$$
such that its kernel and cokernel are cyclic $\frac{A}{PA}[\Delta]$-modules.
\end{proposition}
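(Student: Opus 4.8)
The plan is to deduce the proposition from Taelman's comparison between the class module $H(R_P)$ and the ideal class group of the affine curve $\Spec R_P$, and then to bridge from this affine class group to the divisor class group $Cl^0(K_P)$ of the smooth projective model along the places above $\infty$. The guiding principle is that $H(R_P)$ is intrinsically attached to the ring $R_P$, hence ``sees'' $\Pic(R_P)$, while the discrepancy with $Cl^0(K_P)$ is concentrated on the finite set $S_{\infty}(K_P)$, whose combinatorics is cyclic.

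First I would record the $\Delta$-equivariant input coming from \cite{TAE3}. Exactly as in the Kummer pairing of Lemma \ref{lemma6}, the module $\Hom_A(H(R_P),\Lambda_P)$ has a class-field-theoretic meaning: it is the Galois group of a maximal abelian $P$-extension of $K_P$ unramified away from $\infty$, and the coefficients $\Lambda_P$ impose precisely the Teichm\"uller twist producing the reflection of the Herbrand--Ribet correspondence. Taelman's results package this into a $\frac{A}{PA}[\Delta]$-isomorphism
$$\Hom_A(H(R_P),\Lambda_P)\;\cong\;\Pic(R_P)\otimes_{\bZ}\frac{A}{PA};$$
this is the ``deep'' ingredient, and all one needs to extract from \cite{TAE3} is this isomorphism together with its $\Delta$-equivariance. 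I would also note the structural fact that $\frac{A}{PA}[\Delta]$ is a semisimple (indeed \'etale) $\frac{A}{PA}$-algebra, since $|\Delta|=q^d-1$ is prime to $p$; consequently every submodule and every quotient of a cyclic $\frac{A}{PA}[\Delta]$-module is again cyclic, and every surjection of such modules splits.

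Next I would build the bridge to $Cl^0(K_P)$ using the localization sequence of the open immersion $\Spec R_P\hookrightarrow X$, where $X$ is the smooth projective model:
$$\bigoplus_{w\in S_{\infty}(K_P)}\bZ\cdot[w]\;\longrightarrow\;Cl(K_P)\;\longrightarrow\;\Pic(R_P)\;\longrightarrow\;0.$$
Since $K_P$ admits a divisor of degree one, $Cl(K_P)\cong Cl^0(K_P)\oplus\bZ$, so after tensoring with $\frac{A}{PA}$ and restricting to the degree-zero part I obtain a $\Delta$-morphism $Cl^0(K_P)\otimes_{\bZ}\frac{A}{PA}\to \Pic(R_P)\otimes_{\bZ}\frac{A}{PA}$. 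Its kernel is a subquotient of $\bigoplus_{w}\bZ\otimes\frac{A}{PA}$ and its cokernel a quotient of the degree line $\frac{A}{PA}$. Because $\Delta$ acts transitively on $S_{\infty}(K_P)$ with cyclic stabilizer $\Delta_{\infty}\cong\bF_q^*$ (the common decomposition $=$ inertia group at $\infty$), one has $S_{\infty}(K_P)\cong\Delta/\Delta_{\infty}$ as a $\Delta$-set, whence $\bigoplus_{w}\bZ\otimes\frac{A}{PA}\cong\frac{A}{PA}[\Delta/\Delta_{\infty}]$ is a cyclic $\frac{A}{PA}[\Delta]$-module; hence both kernel and cokernel of the bridge are cyclic. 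Finally, using the splitting available by semisimplicity, I choose $\Delta$-equivariant sections to invert this comparison, producing a map $\Pic(R_P)\otimes\frac{A}{PA}\to Cl^0(K_P)\otimes\frac{A}{PA}$ whose kernel and cokernel are, respectively, the cokernel and the kernel of the bridge, hence cyclic. Composing with the isomorphism from \cite{TAE3} gives the desired $\frac{A}{PA}[\Delta]$-morphism with cyclic kernel and cokernel.

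The main obstacle is the first step: making the comparison that is only ``implicit'' in \cite{TAE3} fully explicit and $\Delta$-equivariant. Concretely, one must identify $H(R_P)$ with the relevant cohomology of the Carlitz motive over $R_P$ and produce the duality/reciprocity pairing realizing $\Hom_A(H(R_P),\Lambda_P)$ inside $\Pic(R_P)\otimes\frac{A}{PA}$, keeping track of the $\Lambda_P$-twist; this is where Taelman's machinery does the real work. By contrast, the bridge to $Cl^0(K_P)$ and the cyclicity of the defect are purely structural, resting only on the transitivity of the $\Delta$-action on $S_{\infty}(K_P)$, the cyclicity of the decomposition groups at $\infty$, and the semisimplicity of $\frac{A}{PA}[\Delta]$.
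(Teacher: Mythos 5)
The paper's own ``proof'' of Proposition \ref{proposition1} is a one-line citation (exact sequence (2), Theorem 2 and Theorem 6 of \cite{TAE3}), so there is no detailed argument to measure you against; still, the shape of the statement tells us what \cite{TAE3} must be supplying, and this is where your proposal goes wrong. Your \emph{bridge} is fine and well executed: the localization sequence $\bigoplus_{w\in S_{\infty}(K_P)}\mathbb{Z}[w]\to Cl(K_P)\to \Pic(R_P)\to 0$, the fact that the places above $\infty$ have degree one and form a single $\Delta$-orbit with stabilizer $\mathbb{F}_q^*$, and the semisimplicity of $\frac{A}{PA}[\Delta]$ (as $|\Delta|=q^d-1$ is prime to $p$) do give a $\Delta$-equivariant comparison between $Cl^0(K_P)\otimes\frac{A}{PA}$ and $\Pic(R_P)\otimes\frac{A}{PA}$ with cyclic defect, and the splitting trick to reverse its direction is legitimate since the proposition only asserts existence.

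The genuine gap is your single input from \cite{TAE3}: the asserted $\frac{A}{PA}[\Delta]$-isomorphism $\Hom_A(H(R_P),\Lambda_P)\cong \Pic(R_P)\otimes_{\mathbb{Z}}\frac{A}{PA}$. This carries the entire content of the proposition and is not what Taelman proves; his results produce a comparison directly with $Cl^0(K_P)\otimes\frac{A}{PA}$ (via the cohomological description of $H(R_P)$ on the complete curve and the Lang isogeny on the Jacobian), and that comparison already comes with cyclic kernel and cokernel. Two signs that your input is misstated: first, an exact equality $\dim_{A/PA}H(R_P)/P\,H(R_P)=\dim_{\mathbb{F}_p}\Pic(R_P)/p\Pic(R_P)$ is a far stronger, main-conjecture-flavoured statement than the componentwise-up-to-cyclic-defect comparison of the Herbrand--Ribet theorem; second, under your hypotheses the resulting map would be \emph{injective} with cyclic cokernel, making the ``cyclic kernel'' clause of the proposition vacuous --- whereas the paper deliberately states a two-sided defect, and only passes to $\Pic(R_P)$ in Corollary \ref{corollary2} after applying $e_{[\omega_P]}$, which kills the $\infty$-contribution precisely because $\omega_P$ is faithful on the decomposition group at $\infty$. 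Note also that you cannot simply weaken your input to ``a morphism with cyclic kernel and cokernel onto $\Pic(R_P)\otimes\frac{A}{PA}$'' and then compose with your bridge: over the semisimple algebra $\frac{A}{PA}[\Delta]$ the defects of two composed maps can pile up in the same isotypic component, so the composite need not have cyclic kernel. So the architecture (deep input plus structural bookkeeping at $\infty$) is reasonable, but the deep input as you state it is unsupported and almost certainly not available, and the proposition does not follow from the weaker, correct version of it by your argument.
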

\begin{proof} This is a consequence of  the following results in \cite{TAE3} :  exact sequence (2), Theorem 2  and Theorem 6.
\end{proof}
\noindent Let $\omega _P : \Delta \simeq (A/PA)^*$ be the cyclotomic character, i.e. $\forall a\in A\setminus PA,$ $\omega _P(\sigma_a)\equiv a \pmod{P}.$ Let $W=\mathbb Z_p[\mu_{q^d-1}],$ and fix $\rho : A/PA \rightarrow W/pW$ a $\mathbb F_p$-isomorphism. We still denote by $\omega_P$ the morphism of groups $\Delta \simeq \mu_{q^d-1}$ wich sends $\sigma_a $ to the unique root of unity congruent to $\rho (\omega_P(a))$ modulo $pW.$ Observe that $\widehat \Delta : = {\rm Hom} (\Delta, W^*)$ is a cyclic group of order $q^d-1$ generated by $\omega_P.$ For $\chi \in \widehat \Delta ,$ we set :\par
\noindent - $e_{\chi} =\frac{1}{q^d-1} \sum_{\delta \in \Delta} \chi (\delta) \delta^{-1} \in W[\Delta],$\par
\noindent - $ [\chi] =\{ \chi ^{p^j}, \, j\geq 0\} \subset \widehat \Delta,$\par
\noindent  - $e_{[\chi]}=\sum_{\psi \in [\chi ]} e_{\psi } \in \mathbb Z_p[\Delta].$\par
\noindent Let ${\rm Pic}(R_P)$ be the ideal class group of the Dedekind domain $R_P.$
\begin{corollary}
\label{corollary2}
The $\mathbb Z_p[\Delta]$-module : $e_{[\omega_P]}({\rm Pic}(R_P)\otimes _\mathbb Z \mathbb Z_p)$ is a cyclic module. Furthermore, it is non trivial if and only if $B(q^d-2)\equiv 0\pmod{P}.$
\end{corollary}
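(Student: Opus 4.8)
The plan is to identify $e_{[\omega_P]}(\Pic(R_P)\otimes_{\mathbb Z}\mathbb Z_p)$ with the $\omega_P$-isotypic part of $Cl^0(K_P)\otimes_{\mathbb Z} A/PA$, deduce cyclicity from Proposition \ref{proposition1}, and settle nontriviality by analysing the ramification of the Kummer extension $L=K_P(\gamma)$. The first step is to compare $\Pic(R_P)$ with $Cl^0(K_P)$: the two differ only through the classes of the places in $S_\infty(K_P)$ and the degree map $\Pic(O_{K_P})\to\mathbb Z$. Since $\Delta$ acts trivially on the target of the degree map and permutes $S_\infty(K_P)$ with stabiliser the decomposition group $\cong\mathbb F_q^*$, on which $\omega_P$ restricts to the nontrivial inclusion $\mathbb F_q^*\hookrightarrow(A/PA)^*$, the idempotent $e_{[\omega_P]}$ kills both contributions. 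Hence $e_{[\omega_P]}(\Pic(R_P)\otimes\mathbb Z_p)$ is canonically the $\omega_P$-part of $Cl^0(K_P)\otimes\mathbb Z_p$, which after base change matches the $\omega_P$-isotypic component of $Cl^0(K_P)\otimes_{\mathbb Z} A/PA$.

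For cyclicity I would apply the exact idempotent $e_{[\omega_P]}$ to the $A/PA[\Delta]$-morphism of Proposition \ref{proposition1}. Because $\Lambda_P$ is a one-dimensional $A/PA$-space on which $\Delta$ acts by $\omega_P$, the $\omega_P$-component of $\Hom_A(H(R_P),\Lambda_P)$ is cyclic; here I would invoke Taelman's equivariant class number formula to see that the $\omega_P$-component of $H(R_P)$ is cyclic, its order being governed by the relevant Bernoulli--Goss value. Since the map of Proposition \ref{proposition1} has cyclic kernel and cokernel, the $\omega_P$-part of $Cl^0(K_P)\otimes A/PA$ is squeezed between cyclic modules and is therefore cyclic; combined with the reduction above this yields cyclicity of $e_{[\omega_P]}(\Pic(R_P)\otimes\mathbb Z_p)$.

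For nontriviality, Lemma \ref{lemma6} together with the triviality of the $\Delta$-action on $U(A)\subset k$ shows that $G=\Gal(L/K_P)\cong\Hom_A(U(A)/P.U(A),\Lambda_P)$ is a nonzero, one-dimensional $\omega_P$-isotypic $A/PA$-space. Next I would prove that $L/K_P$ is split at $\infty$, since $v_\infty(\tfrac{P-1}{P}\log_C(1))=0>-\tfrac{q}{q-1}$ forces $\gamma\in K_{P,w}$ for every $w\in S_\infty(K_P)$, and unramified outside $\mathfrak P=\lambda_P R_P$, as the defining equation $\phi_P(X)=\phi_{P-1}(1)$ is separable with roots differing by $\Lambda_P\subset K_P$. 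The decisive local computation is at $\mathfrak P$: by Lemma \ref{lemma3}, Corollary \ref{corollary1} and the argument of Lemma \ref{lemma4}, the polynomial $\phi_P(X)-\phi_{P-1}(1)$ is Eisenstein over $A_P$ exactly when $B(q^d-2)\not\equiv0\pmod P$, in which case (using $\gcd(q^d,q^d-1)=1$) $\mathfrak P$ is totally ramified in $L/K_P$; while when $B(q^d-2)\equiv0\pmod P$ Lemma \ref{lemma4} produces $x\in A_P$ with $\phi_P(x)=\phi_{P-1}(1)$, so $\gamma\in K_{P,\mathfrak P}$ and $\mathfrak P$ splits. Thus $L/K_P$ is everywhere unramified and split at $\infty$ if and only if $B(q^d-2)\equiv0\pmod P$. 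In that case class field theory for the Dedekind domain $R_P$ exhibits $G$ as a nonzero $\omega_P$-isotypic quotient of $\Pic(R_P)$, whence $e_{[\omega_P]}(\Pic(R_P)\otimes\mathbb Z_p)\neq 0$; conversely, if $B(q^d-2)\not\equiv0$ then $L$ is ramified at $\mathfrak P$, and since the $\omega_P$-part is already cyclic, a nonzero $\omega_P$-component of $\Pic(R_P)$ would cut out an unramified $\omega_P$-extension which by the Kummer duality of Lemma \ref{lemma6} can only be $L$, a contradiction.

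I expect the main obstacle to be precisely this two-way bridge between the Galois group of the explicit extension $L=K_P(\gamma)$ and the isotypic component of the class group: on the one hand the exact local analysis at $\mathfrak P$, where Lemma \ref{lemma4} and Corollary \ref{corollary1} enter, and on the other hand, for the converse, ruling out any further unramified $\omega_P$-extension. The latter is exactly what the cyclicity coming from Proposition \ref{proposition1}, together with the one-dimensionality established in Lemma \ref{lemma6}, is designed to provide.
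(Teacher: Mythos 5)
Your reduction from $\Pic(R_P)$ to $Cl^0(K_P)$ and your use of Proposition \ref{proposition1} are in the spirit of the paper, but both halves of the argument have genuine gaps. For cyclicity, the step ``squeezed between cyclic modules and is therefore cyclic'' is false: if a morphism has cyclic kernel and cokernel and cyclic source, the target is only an extension of a cyclic module by a cyclic module, and over $\frac{A}{PA}[\Delta]$ (a product of fields, since $q^d-1$ is prime to $p$) such an extension can have a $2$-dimensional isotypic component. What actually makes the paper's argument work is that the relevant component of the \emph{source} vanishes: since $\Delta$ acts on $\Lambda_P$ through $\omega_P$, the $e_{\omega_P}$-component of $\Hom_A(H(R_P),\Lambda_P)$ is identified with $\Hom_A(H(R_P)^{\Delta},\Lambda_P)$, and $H(R_P)^{\Delta}=\{0\}$ because the trace gives a surjection $H(R_P)\to H(A)=\{0\}$. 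Hence $e_{\omega_P}(Cl^0(K_P)\otimes A/PA)$ maps isomorphically onto the corresponding component of the (cyclic) cokernel, and the Frobenius-twist isomorphisms $e_{\chi}(Cl^0(K_P)\otimes W)\simeq e_{\chi^p}(Cl^0(K_P)\otimes W)$ propagate the bound over the whole orbit $[\omega_P]$. Neither the vanishing of $H(R_P)^{\Delta}$ nor the twist isomorphism appears in your proposal, and your appeal to an equivariant class number formula addresses a component that is not the relevant one and would not repair the squeeze in any case.

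For nontriviality, your ``if'' direction is correct and is essentially the easy half of Theorem \ref{theorem1}: when $B(q^d-2)\equiv 0\pmod P$, corollary \ref{corollary1} and lemma \ref{lemma4} show $L/K_P$ is unramified and split at infinity, so the nonzero $[\omega_P]$-isotypic group $G$ of lemma \ref{lemma6} is a quotient of $\Pic(R_P)$. The converse, however, is a real gap: you assert that a nonzero $[\omega_P]$-component of $\Pic(R_P)$ would cut out an unramified extension which ``by the Kummer duality of Lemma \ref{lemma6} can only be $L$.'' Lemma \ref{lemma6} computes $\Gal(L/K_P)$ for the one extension generated by $P$-division points of $1$; it does not classify all unramified $[\omega_P]$-isotypic $p$-elementary extensions of $K_P$, and there is no a priori reason such an extension is contained in $L$. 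This implication is exactly the Herbrand-type input, and the paper obtains the full equivalence with $B(q^d-2)\equiv 0\pmod P$ by citing Goss--Sinnott \cite{GOS&SIN} rather than by any argument involving $L$. As written, your proof establishes only one implication of the ``if and only if.''
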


\begin{proof}${}$\par
\noindent  Recall that $H(A)=\{ 0\}.$ Note that the trace map induces a surjective morphism of $A$-modules $H(R_P)\rightarrow H(A).$ Therefore :
$$H(R_P)^{\Delta}=\{0\}.$$
Now, note that, $\forall \chi \in \widehat \Delta ,$ we have an isomorphism of $W$-modules :
$$e_{\chi } (Cl^0(K_P)\otimes_{\mathbb Z}W)\simeq e_{\chi^p } (Cl^0(K_P)\otimes_{\mathbb Z}W).$$
Thus, by proposition \ref{proposition1}, we get that $e_{[\omega_P]}(Cl^0(K_P)\otimes _\mathbb Z \mathbb Z_p)$ is a cyclic $\mathbb Z_p[\Delta]$-module. Furthermore, by \cite{GOS&SIN},  this latter module is non-trivial if and only if $B(q^d-2)\equiv 0\pmod{P}.$ We conclude the proof by noting that :
$$e_{[\omega_P]}(Cl^0(K_P)\otimes _\mathbb Z \mathbb Z_p)\simeq e_{[\omega_P]}({\rm Pic}(R_P)\otimes _\mathbb Z \mathbb Z_p).$$
\end{proof}
\noindent  Recall that $L=K_P(\gamma)$ where $\gamma =e_C(\frac{P-1}{P}log_C(1)).$ Since $\gamma \in O_L,$  the derivative of $\phi_P(X)-\phi_{P-1} (1)$ is equal to $P,$ and $e_C(K_{P,\infty})$ is $A$-divisible, we conclude that $L/K_P$ is unramified outside $P$ and every place of $K_P$ above $\infty$ is totally split in $L/K_P.$ Furthermore, by lemma \ref{lemma4} :\par
\noindent - if $\phi_{P-1}(1)\equiv 0\pmod{P^2},$ $L/K_P$ is unramified,\par
\noindent - if $\phi_{P-1}(1)\not \equiv 0\pmod{P^2},$ $L/K_P$ is totally ramified at the unique prime of $R_P$ above $P$ (see the proof of lemma \ref{lemma4}).\par
\noindent  Let $H/K_P$ be the Hilbert class field of $R_P,$ i.e. $H/K_P$ is the maximal unramified abelian extension of $K_P$ such that every place in $S_{\infty} (K_P)$ is totally split in $H/K_P.$ Then the Artin symbol induces a $\Delta$-equivariant isomorphism :
$${\rm Pic}(R_P)\simeq {\rm Gal}(H/K_P).$$
Note that $e_{[\omega_P]}G=G,$ where $G={\rm Gal}(L/K_P).$  Thus the Artin symbol induces a $\mathbb F_p[\Delta]$-morphism :
$$\psi :e_{[\omega_P]}(\frac{{\rm Pic}(R_P)}{p{\rm Pic}(R_P)} )\rightarrow {\rm Gal}(L\cap H/K_P).$$
Therefore, by corollary \ref{corollary2} and lemma \ref{lemma6}, we get the following result which explains the congruence of corollary \ref{corollary1}:\par
\begin{theorem}
\label{theorem1}
The morphism  of $\mathbb F_p[\Delta]$-modules induced by the Artin map :
$$\psi :e_{[\omega_P]}(\frac{{\rm Pic}(R_P)}{p{\rm Pic}(R_P)} )\rightarrow {\rm Gal}(L\cap H/K_P),$$
is an isomorphism, where $L=K_P(e_C(\frac{P-1}{P}log_C(1)))$ and $H$ is the Hilbert class field of $R_P.$
\end{theorem}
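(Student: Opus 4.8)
The plan is to reduce the statement to a clean dichotomy governed by the congruence $B(q^d-2)\equiv 0\pmod P$, pinning down both source and target in each case and then matching their sizes. The first step is to identify the target group $G={\rm Gal}(L/K_P)$ completely. By Lemma \ref{lemma6} we have $G\simeq {\rm Hom}_A(U(A)/P.U(A),\Lambda_P)$ as $\mathbb{F}_p[\Delta]$-modules. Since $U(A)$ is the free $A$-module of rank one generated by $1$, on which $\Delta$ acts trivially (it lies in $C(A)$ and $\Delta$ fixes $k$), the quotient $U(A)/P.U(A)$ is isomorphic to $A/PA$ with trivial $\Delta$-action; on the other hand $\Lambda_P\simeq A/PA$ with $\Delta$ acting through $\omega_P$. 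Hence $G\simeq (A/PA)(\omega_P)$, a one-dimensional $A/PA\simeq\mathbb{F}_{q^d}$-space lying entirely in the $[\omega_P]$-isotypic component; in particular $G\neq 0$ and $e_{[\omega_P]}G=G$, which is exactly the fact used to define $\psi$.

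Next I would split according to Corollary \ref{corollary1}, which gives $\phi_{P-1}(1)\equiv PB(q^d-2)\pmod{P^2}$, so that $\phi_{P-1}(1)\equiv 0\pmod{P^2}$ if and only if $B(q^d-2)\equiv 0\pmod P$. In the first case, the ramification analysis preceding the theorem shows that $L/K_P$ is everywhere unramified and totally split at the places above $\infty$, whence $L\subseteq H$ and $L\cap H=L$, so the target is all of $G$; moreover restriction ${\rm Gal}(H/K_P)\twoheadrightarrow{\rm Gal}(L/K_P)=G$ is a $\Delta$-equivariant surjection, and applying $e_{[\omega_P]}$ and reducing mod $p$ exhibits $\psi$ as a surjection. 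Since the $[\omega_P]$-component of $\mathbb{F}_p[\Delta]$ is the field $\mathbb{F}_{q^d}$, Corollary \ref{corollary2} tells me the source $e_{[\omega_P]}({\rm Pic}(R_P)/p{\rm Pic}(R_P))$ is a nonzero cyclic module, i.e. one-dimensional over $\mathbb{F}_{q^d}$, and so is $G$; a surjection of one-dimensional $\mathbb{F}_{q^d}$-spaces is an isomorphism. In the second case, $L/K_P$ is totally ramified at the unique prime above $P$; since any subextension of $L/K_P$ is totally ramified there while any subextension of $H/K_P$ is unramified, I get $L\cap H=K_P$, so the target vanishes. Simultaneously Corollary \ref{corollary2} forces $e_{[\omega_P]}({\rm Pic}(R_P)\otimes_{\mathbb{Z}}\mathbb{Z}_p)=0$, hence, tensoring the idempotent decomposition with $\mathbb{F}_p$, also $e_{[\omega_P]}({\rm Pic}(R_P)/p{\rm Pic}(R_P))=0$, and $\psi$ is the zero map between zero modules.

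The main obstacle I anticipate is not any isolated computation but the bookkeeping needed to make the size comparison rigorous through the idempotent formalism. In the unramified case the crucial structural input is that the $[\omega_P]$-component of $\mathbb{F}_p[\Delta]$ is a field, so that "cyclic and nonzero" upgrades the surjection coming from restriction into an isomorphism; the points to verify carefully are the $\Delta$-equivariance of the Artin restriction map and the compatibility of $e_{[\omega_P]}$ with reduction modulo $p$, both of which are needed to count dimensions over $\mathbb{F}_{q^d}$. In the ramified case the only delicate claim is the purely local one that an extension which is simultaneously a subextension of a totally ramified extension and of an unramified extension must be trivial, giving $L\cap H=K_P$.
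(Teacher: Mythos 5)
Your proposal is correct and follows essentially the same route as the paper: the authors likewise combine Lemma \ref{lemma6} (to identify $G={\rm Gal}(L/K_P)$ as a one-dimensional $[\omega_P]$-isotypic module), the ramification dichotomy governed by Corollary \ref{corollary1}, and Corollary \ref{corollary2} for the class-group side; their proof is just the terse citation of these ingredients, and you have filled in the dimension count over $\mathbb F_{q^d}$ that makes the surjection an isomorphism.
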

\subsection{Prime decomposition  of units}${}$\par
\noindent A natural question arise : are there infinitely many primes $P$ such that $\phi_{P-1} (1) \equiv 0\pmod{P^2}\,?$ We end this note by some remarks centered around this question.
\begin{lemma}
\label{lemma7}
Let $N(d)$ be the number of primes $P$ of degree $d$ such that $\phi_{P-1}(1) \not \equiv 0\pmod{P^2}.$ Then :
$$N(d) > \frac{1}{d} (q-1) q^{d-1} - \frac{q}{d(q-1)} q^{d/2}.$$

\end{lemma}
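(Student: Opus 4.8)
The plan is to reduce the count to the divisibility of a single explicit polynomial, and then to combine a sharp bound on its degree‑$d$ factors with the standard count of primes. First I would translate the defining condition. By Corollary \ref{corollary1} we have $\phi_{P-1}(1)\equiv PB(q^d-2)\pmod{P^2}$, and since $v_P(P)=1$ for a prime $P$ of degree $d$, this gives $\phi_{P-1}(1)\equiv 0\pmod{P^2}$ if and only if $P\mid B(q^d-2)$. By Lemma \ref{lemma1} (with $c=2$) the latter is equivalent to $\sum_{k=0}^{d-1}L_k^{-1}\equiv 0\pmod P$. As $v_P(L_k)=0$ for $0\le k\le d-1$, clearing denominators by $L_{d-1}$ shows that $P$ fails the condition exactly when $P\mid H_d$, where
$$H_d:=L_{d-1}\sum_{k=0}^{d-1}\frac{1}{L_k}=\sum_{k=0}^{d-1}\prod_{i=k+1}^{d-1}(T-T^{q^i})\in A.$$
Writing $\pi_q(d)$ for the number of primes of degree $d$ and $\nu_d$ for the number of those dividing $H_d$, we have $N(d)=\pi_q(d)-\nu_d$.

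Next I would record the structure of $H_d$. From $T-T^{q^{d-1}}=-(T^{q^{d-1}}-T)$ one gets the recursion $H_d=1-(T^{q^{d-1}}-T)H_{d-1}$ with $H_1=1$; hence $\deg_T H_d=q+q^2+\cdots+q^{d-1}=\tfrac{q^d-q}{q-1}$, and reducing modulo $T^{q^{d-1}}-T$ yields $H_d\equiv 1$. Thus $H_d$ is coprime to $T^{q^{d-1}}-T$, so $H_d\neq 0$ (and $\nu_d$ is well defined) and $H_d$ has no root in $\bF_{q^{d-1}}$. In terms of roots, $d\,\nu_d=\#\{\theta\in\bF_{q^d}:[\bF_q(\theta):\bF_q]=d,\ H_d(\theta)=0\}$. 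For the prime count, $\pi_q(d)=\tfrac1d\sum_{e\mid d}\mu(d/e)q^e$, and since every proper divisor of $d$ is $\le d/2$, $\sum_{e\mid d,\ e<d}\mu(d/e)q^e\ge-\sum_{j=1}^{d/2}q^j>-\tfrac{q^{d/2+1}}{q-1}$, whence $\pi_q(d)>\tfrac{q^d}{d}-\tfrac{q}{d(q-1)}q^{d/2}$. The entire statement then follows from the single inequality $\nu_d\le q^{d-1}/d$, since
$$N(d)=\pi_q(d)-\nu_d>\frac{q^d}{d}-\frac{q}{d(q-1)}q^{d/2}-\frac{q^{d-1}}{d}=\frac{(q-1)q^{d-1}}{d}-\frac{q}{d(q-1)}q^{d/2}.$$

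The main obstacle is exactly the bound $\nu_d\le q^{d-1}/d$, i.e. that $H_d$ has at most $q^{d-1}$ roots in $\bF_{q^d}$. This lies deeper than naive degree‑counting: since $\deg H_d=q+\cdots+q^{d-1}$ already exceeds $q^{d-1}$, the trivial estimate $\nu_d\le \deg H_d/d$ is too weak (it fails the statement already at $q=3,\ d=4$), while a direct Weil bound for $\#\{\theta\in\bF_{q^d}:H_d(\theta)=0\}$ carries an error of size $\sim(\deg H_d)\,q^{d/2}$ that is far too large. One must instead exhibit genuine cancellation, showing that a positive proportion of the roots of $H_d$ have degree $>d$ over $\bF_q$. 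I would attack this through the recursion $H_d=1-(T^{q^{d-1}}-T)H_{d-1}$ and the congruence $H_d\equiv 1\pmod{T^{q^{d-1}}-T}$, aiming to bound $\deg\gcd(H_d,\,T^{q^d}-T)$ by $q^{d-1}$: for instance by producing an auxiliary polynomial $H_d^{*}$ with $H_dH_d^{*}$ congruent to a constant modulo $T^{q^d}-T$, or by identifying an Artin–Schreier–type factorization (the base case $H_2=1-(T^q-T)$ is already Artin–Schreier). Establishing this clean, error‑free bound is the crux on which the stated inequality rests; the reduction and the numerical combination above are then routine.
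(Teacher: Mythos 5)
Your reduction is exactly the one the paper uses: the paper introduces $V(d)=\sum_{i=0}^{d-1}L_{d-1}/L_i$ (your $H_d$), observes via Lemma \ref{lemma3} that a prime $P$ of degree $d$ satisfies $\phi_{P-1}(1)\equiv 0\pmod{P^2}$ if and only if $P$ divides $V(d)$, and subtracts the count of such primes from the standard lower bound for the number of degree-$d$ primes (your bound for $\pi_q(d)$ coincides with the paper's bound for $N_q(d)$). The divergence is precisely at the step you single out as the crux. The paper asserts $\deg_T V(d)=q^{d-1}$ and deduces $M(d)\le q^{d-1}/d$ by the trivial degree count; your computation $\deg_T V(d)=q+q^2+\cdots+q^{d-1}=\frac{q^d-q}{q-1}$ is the correct one, since the $i=0$ summand $L_{d-1}=\prod_{j=1}^{d-1}(T-T^{q^j})$ already has that degree and the summands have pairwise distinct degrees, so no cancellation occurs (for $d=2$ the two values coincide, which may explain the slip). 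So what you have identified is not merely an obstacle to your own argument: it is an unjustified (indeed false, for $d\ge 3$) claim in the paper's proof, and your observation that the trivial bound already fails to yield the stated constant at $q=3$, $d=4$ is accurate.

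That said, your proposal does not prove the lemma either. The inequality $\nu_d\le q^{d-1}/d$, equivalently that $V(d)$ has at most $q^{d-1}$ roots in $\mathbb F_{q^d}$, is exactly what is missing, and you only offer speculative routes toward it (an auxiliary polynomial $H_d^{*}$, an Artin--Schreier factorization) without carrying any of them out; the congruence $H_d\equiv 1\pmod{T^{q^{d-1}}-T}$ controls roots in $\mathbb F_{q^{d-1}}$, which is not the relevant field. What the trivial degree count does give, error-free, is
$$N(d)>\frac{1}{d}\cdot\frac{(q-2)q^{d}+q}{q-1}-\frac{q}{d(q-1)}\,q^{d/2},$$
which is weaker than the stated bound but still positive for $q\ge 3$ and $d$ large, hence suffices for the qualitative point that most primes $P$ have $\phi_{P-1}(1)\not\equiv 0\pmod{P^2}$. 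To salvage the lemma as printed you must either prove your conjectured root bound for $V(d)$ or weaken the constant; as it stands, the gap you describe is real and is shared by the source.
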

\begin{proof}
Let $N_q(d)$ be the number of primes of degree $d.$ Then :
$$N_q(d) >\frac{1}{d}q^d - \frac{q}{d(q-1)} q^{d/2}.$$
Let $M(d)$ be the number of primes $P$ of degree $d$ such that $\phi_{P-1}(1)\equiv 0\pmod{P^2}.$
Set :
$$V(d)=\sum_{i=0}^{d-1} \frac{L_{d-1}}{L_i}\in A.$$
Then ${\rm deg}_T V(d)= q^{d-1},$ and if $P$ is a prime of degree $d,$ we have by lemma \ref{lemma3}  : $\phi_{P-1}(1)\equiv 0\pmod{P^2}$ if and only if $V(d)\equiv 0\pmod{P}.$
Therefore :
$$M(d)\leq \frac{1}{d}q^{d-1}.$$
\end{proof}
\begin{remark}${}$\par
\noindent  We have :
$$V(2)= 1+T-T^q.$$ Thus $V(2)$ is (up to sign) the product of $q/p$ primes of degree $p.$ Therefore there exist primes $P$  of degree $2$ such that $\phi_{P-1}(1)\equiv 0\pmod{P^2}$ if and only if $p=2,$ and in this case there are exactly $q/2$ such primes.\par${}$\par

\noindent Set $H(X)=\sum_{i=0}^{p-1} \frac{1}{i!} X^i \in \mathbb F_p[X].$ Let $S$ be the set of roots of $H(X)$ in $\bC_{\infty}.$ Note that $\mid S\mid =p-1.$ Let's suppose  that $S\subset \mathbb F_q.$ Let $P$ be a prime of $A$ such that $P$ divides $T^q-T -\alpha $  for some  $\alpha \in \mathbb F_q^*.$ Observe that such a prime is of degree $p.$ Now, for $k=0,\cdots , p-1,$ we have :
$$L_k\equiv \frac{1}{k!} (-\alpha) ^k\pmod{P}.$$
Therefore :
$$V(p)=\sum_{i=0}^{p-1} \frac{L_{p-1}}{L_i}\equiv -\alpha^{p-1} H(\frac{-1}{\alpha})\pmod{P}.$$
Thus there exist at least $(p-1)\frac{q}{p}$ primes $P$ in $A$ of degree $p$ such that $\phi_{P-1}(1)\equiv 0\pmod{P^2}.$
\end{remark}
\begin{lemma}
\label{lemma8}
Let $P$ be a prime of degree $A$ and let $n\geq 1.$ We have an isomorphism of $A$-modules :
$$C(\frac{A}{P^nA})\simeq \frac{A}{P^{n-1}(P-1)A}.$$
\end{lemma}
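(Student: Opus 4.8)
The plan is to filter $C(A/P^nA)$ by the Carlitz submodules $P^iA/P^nA$ and to separate the part supported at $P$ from the part supported at $P-1$. First I would note that each $P^iA/P^nA$ is stable under $\phi$: if $y=P^iz$ then $\phi_T(y)=Ty+y^q=P^i(Tz+P^{i(q-1)}z^q)$, which again lies in $P^iA/P^nA$ since $i(q-1)\geq 0$. Hence inclusion and reduction modulo $P$ give a short exact sequence of $A$-modules (via $\phi$)
\[
0\to C(PA/P^nA)\to C(A/P^nA)\to C(A/PA)\to 0 .
\]
It then suffices to identify the two outer terms and to check that the extension splits.

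For the quotient I claim $C(A/PA)\simeq A/(P-1)A$. By Lemma \ref{lemma3} the polynomial $\phi_P(X)=\sum_{k=0}^d[^{P}_k]X^{q^k}$ has all of its coefficients $[^{P}_k]$ with $0\leq k\leq d-1$ divisible by $P$ and leading coefficient $[^{P}_d]=1$, so $\phi_P(x)\equiv x^{q^d}\pmod P$; since $x^{q^d}=x$ on $A/PA\simeq \bF_{q^d}$ we get $\phi_P=\mathrm{id}$ and therefore $\phi_{P-1}=\phi_P-\phi_1=0$ on $C(A/PA)$. Thus $P-1$ annihilates $C(A/PA)$. To see that this module is cyclic I would use the criterion that a finite $A$-module $M$ is cyclic as soon as $\dim_{A/\ell}M[\ell]\leq 1$ for every prime $\ell$. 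Here $M[\ell]=\ker(\phi_\ell)$ inside the field $\bF_{q^d}$, and since $\phi_\ell(X)$ is an $\bF_q$-linear polynomial of degree $q^{\deg\ell}$ it has at most $q^{\deg\ell}$ roots, forcing $\dim_{\bF_q}M[\ell]\leq \deg\ell$, i.e. $\dim_{A/\ell}M[\ell]\leq 1$. Hence $C(A/PA)$ is cyclic, and being annihilated by $P-1$ and of cardinality $q^d=|A/(P-1)A|$ it must equal $A/(P-1)A$.

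For the submodule I would invoke the $P$-adic analysis of $e_C$ and $\log_C$ recalled just before Lemma \ref{lemma4}. Working in $A_P$, the logarithm $\log_C$ is $\bF_q$-linear, satisfies $\log_C(\phi_a(x))=a\log_C(x)$, and by the estimate $v_P(\log_C(\alpha))=v_P(\alpha)$, valid for $v_P(\alpha)>\frac{1}{q^d-1}$, it restricts to a bijection of $PA_P$, and of each $P^iA_P$, onto itself, with inverse $e_C$. Consequently $\log_C$ induces an isomorphism of $A$-modules from $C(PA_P/P^nA_P)=C(PA/P^nA)$ onto $PA_P/P^nA_P$ equipped with its ordinary multiplication, and the latter is visibly $\simeq A/P^{n-1}A$.

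Finally I would glue the two pieces. The submodule $A/P^{n-1}A$ is $P$-primary while the quotient $A/(P-1)A$ is annihilated by $P-1$, which is prime to $P$; since $\mathrm{Ext}^1_A(A/(P-1)A,A/P^{n-1}A)\simeq A/\gcd(P-1,P^{n-1})A=0$, the sequence splits, and the Chinese remainder theorem gives
\[
C(A/P^nA)\simeq A/P^{n-1}A\oplus A/(P-1)A\simeq A/P^{n-1}(P-1)A .
\]
The step I expect to require the most care is the passage to finite quotients in the logarithm argument: one must check that $\log_C$ genuinely preserves the filtration by the $P^iA_P$, so that it descends to the truncation $PA_P/P^nA_P$, which is exactly guaranteed by the isometry $v_P(\log_C(\alpha))=v_P(\alpha)$. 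The cyclicity of $C(A/PA)$ via root counting is the other point to get right, since the same bound fails over the non-reduced ring $A/P^nA$ and is precisely why the $P$-primary part has to be handled separately through $\log_C$.
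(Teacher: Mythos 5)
Your proof is correct, and its skeleton coincides with the paper's: the same short exact sequence $0\to C(PA/P^nA)\to C(A/P^nA)\to C(A/PA)\to 0$, the same identification of the quotient with $A/(P-1)A$ via $\phi_P(x)\equiv x^{q^d}=x \pmod P$ together with a root-counting argument for cyclicity, and the same final splitting by coprimality of $P^{n-1}$ and $P-1$ (which the paper leaves implicit, whereas your $\mathrm{Ext}^1$ computation spells it out). The one step where you genuinely diverge is the identification of the kernel $C(PA/P^nA)$ with $A/P^{n-1}A$. The paper does this directly from Lemma \ref{lemma3}: the valuation identity $v_P(\phi_P(a))=1+v_P(a)$ for $a\in PA$ shows that the class of $P$ generates $C(PA/P^nA)$ and has annihilator exactly $P^{n-1}A$, so a cardinality count finishes. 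You instead linearize via the Carlitz logarithm, using the $P$-adic isometry $v_P(\log_C(\alpha))=v_P(\alpha)$ on $PA_P$ to produce an $A$-module isomorphism of $C(PA_P/P^nA_P)$ with the ordinary module $PA_P/P^nA_P$; this is valid (additivity of $\log_C$ makes it descend to the quotient, exactly the point you flag), and it buys a slightly more structural explanation of why the $P$-primary part of $C(A/P^nA)$ looks exactly like the $P$-primary part of $A/P^nA$ shifted by one, at the cost of invoking the $P$-adic convergence estimates where the paper only needs the elementary divisibility statement of Lemma \ref{lemma3}. Both routes are sound.
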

\begin{proof} We first treat the case $n=1.$ By lemma \ref{lemma3}, we have : $\phi_P(X) \equiv X^{q^d} \pmod{P}.$ Therefore $(P-1) C(A/PA)=\{0\}.$ Now let $Q\in A$ such that $Q.C(A/PA)=\{0\}.$ Then the polynomial $\phi_Q(X) \pmod{P}\, \in (A/PA)[X]$ has  $q^d$ roots in $A/PA.$ Thus ${\rm deg}_TQ\geq d.$This implies that $C(A/PA)$ is a cyclic $A$-module isomorphic to $A/(P-1)A.$\par
\noindent Now let's assume that $n\geq 2.$ By lemma \ref{lemma3}, we have:
$$\forall a\in PA,\, v_P(\phi_P(a))=1+v_P(a).$$
This implies that $C(PA/P^nA)$ is a cyclic $A$-module isomorphic to $A/P^{n-1}A$ and $P$ is a generator of this module. The lemma follows from the fact that we have an exact sequence of $A$-modules :
$$0\rightarrow C(\frac{PA}{P^nA})\rightarrow C(\frac{A}{P^nA})\rightarrow C(\frac{A}{PA})\rightarrow 0.$$
\end{proof}
\noindent We deduce from the above lemma :
\begin{corollary}
\label{corollary3}
Let $P$ be a prime of $A.$ Then $\phi_{P-1}(1)\equiv 0\pmod{P^2}$ if and only if there exists $a\in A\setminus PA$ such that $\phi_a(1) \equiv 0\pmod{P^2}.$
\end{corollary}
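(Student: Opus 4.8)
The plan is to reduce everything to Lemma~\ref{lemma8} with $n=2$, which supplies an isomorphism of $A$-modules
$$C(A/P^2A)\simeq A/P(P-1)A.$$
Both conditions in the statement are assertions about the element $1$ in the $A$-module $M:=C(A/P^2A)$, so I would introduce its annihilator
$$I=\{a\in A:\ \phi_a(1)\equiv 0\pmod{P^2}\},$$
an ideal of $A$. Since $1\in M$ and $\mathrm{Ann}_A(M)=P(P-1)A$, we have $I\supseteq P(P-1)A$; as $A$ is a PID this forces $I=fA$ for a unique monic $f$ dividing $P(P-1)$.

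The decisive point is that $P$ and $P-1$ are coprime, since $P-1\equiv -1\pmod P$, and that $P$ is irreducible. Consequently $P$ divides $P(P-1)$ to the first power only, and every monic divisor of $P(P-1)$ can be written as $f=P^{\varepsilon}g$ with $\varepsilon\in\{0,1\}$ and $g$ a monic divisor of $P-1$. In particular $f$ divides $P-1$ if and only if $\varepsilon=0$, that is, if and only if $P$ does not divide $f$.

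It then remains to express the two conditions through this dichotomy. On the one hand, $\phi_{P-1}(1)\equiv 0\pmod{P^2}$ says exactly that $P-1\in I=fA$, i.e. that $f$ divides $P-1$, hence that $P\nmid f$. On the other hand, the existence of $a\in A\setminus PA$ with $\phi_a(1)\equiv 0\pmod{P^2}$ says exactly that $fA\not\subseteq PA$: if $P\mid f$ then every multiple of $f$ lies in $PA$, whereas if $P\nmid f$ one may simply take $a=f$. Thus both conditions are equivalent to $P\nmid f$, and therefore to each other.

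I do not anticipate a genuine obstacle: the argument is a clean extraction from Lemma~\ref{lemma8}, and the only subtlety is ensuring that the $P$-part and the prime-to-$P$ part of the cyclic module $C(A/P^2A)$ do not interfere, which is guaranteed by $\gcd(P,P-1)=1$. One implication is in fact immediate, since $P-1\in A\setminus PA$; the substance of the corollary is the converse, which the annihilator computation delivers uniformly.
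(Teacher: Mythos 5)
Your proof is correct and follows the route the paper intends: the paper gives no explicit argument, stating only that the corollary is deduced from Lemma~\ref{lemma8}, and your annihilator computation in the cyclic module $C(A/P^2A)\simeq A/P(P-1)A$, together with the coprimality of $P$ and $P-1$, is precisely that deduction. Nothing is missing.
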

\noindent This latter corollary leads us to the following problem:\par
\begin{question} Let $b\in A_+.$ Is it true that there exists a prime $Q$ of $A,$ $Q\equiv 1\pmod{b},$ such that $\phi_Q(1)$ is not squarefree ?
\end{question}
\noindent A positive answer to that question has the following consequence :\par
\begin{lemma}
\label{lemma9} Assume that for every $b\in A_+,$ we have a positive answer to question $1.$ Then, there exist infinitely many primes $P$ such that $\phi_{P-1} \equiv 0\pmod{P^2}.$
\end{lemma}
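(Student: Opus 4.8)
The plan is to argue by contradiction. Suppose that only finitely many primes $P_1,\dots,P_r$ of $A$ satisfy $\phi_{P-1}(1)\equiv 0\pmod{P^2}$. The strategy is to feed a carefully chosen modulus into the hypothesis (Question 1) and extract from the resulting non-squarefree value a \emph{new} prime with the same property, contradicting the assumed finiteness.

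First I would set
$$b=\prod_{i=1}^{r}(P_i-1)\in A_+.$$
Since each $P_i$ is monic of positive degree, each $P_i-1$ is monic, so $b\in A_+$. Applying the assumed positive answer to Question 1 for this $b$, I obtain a prime $Q$ of $A$ with $Q\equiv 1\pmod{b}$ such that $\phi_Q(1)$ is not squarefree; that is, there is a monic prime $P$ with $P^2\mid\phi_Q(1)$.

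The key step is to verify that $P$ is genuinely new, i.e. $P\notin\{P_1,\dots,P_r\}$, and also that $P\neq Q$. For the first point I use lemma \ref{lemma8}: since $C(A/P_iA)\simeq A/(P_i-1)A$ as $A$-modules, the annihilator of $C(A/P_iA)$ is $(P_i-1)$. As $(P_i-1)\mid b$ we have $Q-1\in(P_i-1)A$, so $Q-1$ annihilates $C(A/P_iA)$; hence $\phi_Q$ acts as $\phi_1=\mathrm{id}$ on this module, and evaluating at the class of $1$ gives $\phi_Q(1)\equiv 1\pmod{P_i}$. In particular $P_i\nmid\phi_Q(1)$, so $P\neq P_i$ for every $i$. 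For the second point, lemma \ref{lemma3} applied to $Q$ gives $\phi_Q(1)\equiv 1\pmod{Q}$, so $Q\nmid\phi_Q(1)$; since $P\mid\phi_Q(1)$ this forces $P\neq Q$, and as $P,Q$ are distinct monic primes we conclude $Q\in A\setminus PA$.

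Finally I would invoke corollary \ref{corollary3}: the element $a=Q\in A\setminus PA$ satisfies $\phi_a(1)\equiv 0\pmod{P^2}$, whence $\phi_{P-1}(1)\equiv 0\pmod{P^2}$. Thus $P$ is a prime with the defining property lying outside $\{P_1,\dots,P_r\}$, contradicting finiteness; therefore infinitely many such primes exist. I expect the main obstacle to be precisely the verification that the square-divisor prime $P$ is new rather than accidentally one of the $P_i$ or equal to $Q$: this is where the congruence $Q\equiv 1\pmod{b}$ combined with the module computation of lemma \ref{lemma8} (identifying $\phi_Q(1)\bmod P_i$ through the annihilator $(P_i-1)$) does the real work, together with the separate use of lemma \ref{lemma3} to rule out the degenerate case $P=Q$.
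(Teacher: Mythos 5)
Your proof is correct and follows the same overall strategy as the paper: argue by contradiction, feed a modulus built from the $P_i-1$ into Question 1, rule out $P=Q$ via Lemma \ref{lemma3} and $P=P_i$ via Lemma \ref{lemma8}, then conclude with Corollary \ref{corollary3}. The one genuine (small) divergence is in how $P\neq P_i$ is excluded: the paper takes $b=1+\prod_i(P_i-1)$ so that $Q$ is coprime to each $P_i-1$, and then uses Lemma \ref{lemma8} with $n=2$ (the cyclic module $C(A/P_i^2A)\simeq A/P_i(P_i-1)A$) to get $\phi_Q(1)\not\equiv 0\pmod{P_i^2}$; you instead take $b=\prod_i(P_i-1)$ so that $(P_i-1)\mid(Q-1)$, and use Lemma \ref{lemma8} with $n=1$ to get the stronger conclusion $\phi_Q(1)\equiv 1\pmod{P_i}$. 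Your variant is arguably cleaner, since the divisibility $(P_i-1)\mid(Q-1)$ is immediate from the congruence on $Q$, whereas the paper's coprimality claim requires a separate (and slightly fiddly) degree argument.
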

\begin{proof}
Let $S$ be the set of primes $P$ such that $\phi_{P-1}(1) \equiv 0\pmod{P^2}.$ Let's assume that $S$ is finite. Write $S=\{ P_1, \cdots , P_s\}.$ Set $b=1+\prod_{i=1} ^s (P_i-1)$ (if $S=\emptyset,$ $b=1$). Let $Q$ be a prime of $A$ such that $\phi_Q(1)$ is not squarefree and $Q\equiv 1\pmod{b}.$ Then there exists a prime $P$ of $A$ such that :
$$\phi_Q(1) \equiv 0\pmod{P^2}.$$
Since $\phi_P(1)\equiv 1\pmod{P},$ we have $P\not = Q$ and therefore $Q\in A\setminus PA.$ Furthermore, for $i=1,\cdots ,s,$ $Q$ is prime to $P_i-1.$ Therefore, by lemma \ref{lemma8}, $\phi_Q(1)\not \equiv 0\pmod {P_i^2}.$ Thus $P\not \in S$ which is a contradiction by corollary \ref{corollary3}.
\end{proof}

  \end{document}